\documentclass[10pt]{amsart}

\usepackage{amsmath,amssymb,amscd,amsxtra,upref}
\usepackage{mathrsfs}
\usepackage{latexsym}
\usepackage[dvips]{graphics,epsfig}
\usepackage{enumerate}
\usepackage[colorlinks=true, pdfstartview=FitV, linkcolor=blue, citecolor=blue, urlcolor=blue]{hyperref}
\usepackage{color}
\usepackage{bbm}
\usepackage{breqn}
\usepackage[left=3cm, right=3cm, top=3.5cm]{geometry}

 \makeatletter
\newtheorem*{rep@theorem}{\rep@title}
\newcommand{\newreptheorem}[2]{%
\newenvironment{rep#1}[1]{%
 \def\rep@title{#2 \ref*{##1}}%
 \begin{rep@theorem}}%
 {\end{rep@theorem}}}
\makeatother

\newtheorem{theorem}{Theorem}[section]
\newtheorem{lemma}[theorem]{Lemma}
\newtheorem{proposition}[theorem]{Proposition}
\newtheorem{corollary}[theorem]{Corollary}
\newreptheorem{proposition}{Proposition}
\newreptheorem{theorem}{Theorem}

\theoremstyle{remark}
\newtheorem{definition}{Definition}[section]

%%% Macros...
\newcommand{\R}{\mathbb{R}}
\newcommand{\N}{\mathbb{N}}
\newcommand{\Z}{\mathbb{Z}}
\newcommand{\Rn}{\mathbb{R}^n}
\newcommand{\Rnn}{\mathbb{R}^{2n}}
\newcommand{\C}{\mathbb{C}}

\newcommand{\He}{\mathbb{H}}
\newcommand{\Hn}{\mathbb{H}^{n}}
\newcommand{\rhol}{\rho_{\lambda}}

\newcommand{\Ma}{\mathcal{M}}

\newcommand{\dotp}{\boldsymbol{\cdot}}
\newcommand{\fhat}{\widehat{f}}
\newcommand{\muhat}{\widehat{\mu}}

\newcommand{\F}{\mathcal{F}}
\newcommand{\Sw}{\mathscr{S}}

\newcommand{\BL}{\mathcal{B}(L^2(\Rn))}
\newcommand{\UL}{\mathcal{U}(L^2(\Rn))}
\newcommand{\Sa}{\mathcal{S}}
\newcommand{\pla}{\phi_{\alpha,\lambda}}
\newcommand{\plb}{\phi_{\beta,\lambda}}
\newcommand{\SPL}{S_p(L^2(\Rn))}
\newcommand{\psihat}{\widehat{\psi}}
\newcommand{\Hhat}{\widehat{\He}^n}
\newcommand{\dhat}{d_{\Hhat}}
\newcommand{\Ht}{\widetilde{\He}^n}
\newcommand{\muhatH}{\widehat{\mu}_{\Hn}}
\newcommand{\fhatH}{\widehat{f}_{\Hn}}
\newcommand{\FH}{\mathcal{F}_{\Hn}}
\newcommand{\dht}{d_{\Ht}}

\newcommand{\psie}{\psi_{\epsilon}}

\newcommand{\mue}{\mu_{\epsilon}}
\newcommand{\muehat}{\widehat{\mu}_{\epsilon, \Hn}}
\hyphenation{pseudo-differential} %example

\thanks{
{\it Key words and phrases.}  Heisenberg group, Fourier transform,
Radon measures.
\\
{\bf 2010 Mathematics Subject Classification.} Primary: 43A05, 43A30;
Secondary: 28A75.}

\begin{document}

%\dedicatory{}

%\subjclass[2010]{Primary ?????; Secondary ?????} %consult http://www.ams.org/mathscinet/msc/msc2010.html

%\keywords{here keywords}

%\thanks{here thanks}

\address{Fernando Rom\'{a}n Garc\'{i}a, Department of Mathematics, 250 Atgeld Hall, University of Illinois Urbana Champaign,
Urbana, Il 61801, USA.} \email{romanga2@illinois.edu}

\title[Fourier Coefficients and Hausdorf Dimension in the Heisenberg group]
{A Fourier Coefficients Approach to Hausdorff Dimension in the Heisenberg Group}
\author{Fernando Rom\'{a}n Garc\'{i}a}

\date{\today}

\begin{abstract}
This paper establishes connections between the group-Fourier transform and the geometry of measures in the Heisenberg group. Firstly, it is shown that if the Fourier transform of a compactly supported, finite, Radon measure is square integrable, then the measure must have a square integrable density. If it's Fourier transform is integrable, the the measure must have a continuous density. In addition, an alternative formulation of the Fourier transform on the Heisenberg group is used to show that energies of measures can be computed via integrals on an appropriate frequency space. This in turns opens the possibility of using Fourier methods in the computation of Hausdorff dimension of sets. 
%An application to the Heisenberg distance set problem is presented.
\end{abstract}

\maketitle

\section{Introduction}
In studying the geometric properties of measures in $\Rn$, it is common to study their Fourier transforms. For $g\in L^1(\Rn)$,
\[\mathcal{F}_E (g)(\xi)=(2\pi)^{-n/2}\int e^{-ix\dotp \xi}g(x)dx.\]
Similarly, for a compactly supported, positive, and finite Radon measure, $\mu$, in $\Rn$, its distributional Fourier transform is given by  
\[\mathcal{F}_E(\mu)(\xi):=(2\pi)^{-n/2}\int_{\Rn}e^{- i x\cdot \xi}d\mu (x).\]
 As expected, $\mathcal{F}_E(\mu)$ and $\mu$ are closely related. For instance if $f$ is a function in the Schwartz class of $\Rn$, then
\begin{equation}\label{Eplanch}
\int_{\Rn}f(x)d\mu(x)=\int_{\Rn}\mathcal{F}_E(f)(x)\overline{\mathcal{F}_E(\mu)(x)}dx.
\end{equation}
These sort of relations between $\mathcal{F}_E(\mu)$ and $\mu$ are key in the application of Fourier theory to the potential-theoretic approach to Hausdorff dimension  introduced by R. Kauffman in \cite{Kaufman1}. This approach establishes a conection between Hausdorff dimension and energies of measures.
\begin{theorem}
Let $A\subset\Rn$ be a Suslin set, and denote by $\Ma(A)$ the collection of all positive and finite Radon measures with compact support in $A$. Then
\begin{equation}
\dim A=\sup\{\sigma>0:\exists\ \mu\in\Ma(A)\text{ s.t. }I_{\sigma}^E(\mu)<\infty\},
\end{equation}
where\begin{equation}
I_{\sigma}^E(\mu)=\int_{\Rn}\int_{\Rn}|x-y|^{-\sigma}d\mu(y)d\mu(x).
\end{equation}
\end{theorem}
 This results holds more generally for Suslin subsets of complete metric spaces, by replacing the integrand, $|x-y|^{-\sigma}$, with the distance between $x$ and $y$, $d(x,y)^{-\sigma}$. In Euclidean space, \eqref{Eplanch} allows the computation of energies of measures via integrals on the frequency domain. More precisely, there is a constant $c (n,\sigma)$, depending only on $n$ and $\sigma$, such that
\begin{equation}\label{FEnergy}
I_{\sigma}^E(\mu)=c(n,\sigma)\int_{\Rn}|\xi|^{\sigma-n}|\mathcal{F}(\mu)(\xi)|^2d\xi.
\end{equation}
 A proof of this result can be found in \cite[pp. 38]{Mattila1}. Another result connecting properties of $\mathcal{F}_E(\mu)$ to $\mu$ itself is the following,
\begin{theorem} \label{Th2}
Let $\mu$ be a compactly supported, positive, finite, Radon measure on $\Rn$.
\begin{enumerate}
\item If $\mathcal{F}_E(\mu)\in L^2(\Rn)$, then $\mu = gdx$ with $g\in L^2(\Rn)$.
\item If $\mathcal{F}_E(\mu)\in L^1(\Rn)$, then $\mu = hdx$ with $h\in\mathcal{C}(\Rn).$
\end{enumerate}
\end{theorem}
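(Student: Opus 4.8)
The plan is to prove both parts via mollification together with the Plancherel theorem, choosing an approach that will transfer to the group setting later in the paper. Fix a nonnegative $\phi\in C_c^\infty(\mathbb{R}^d)$ with $\int\phi\,dx=1$ and set $\phi_\epsilon(x)=\epsilon^{-d}\phi(x/\epsilon)$, so that $(\phi_\epsilon)_{\epsilon>0}$ is an approximate identity. Since $\mu$ is a compactly supported Radon measure, the convolutions $\mu_\epsilon:=\mu*\phi_\epsilon$ are smooth and compactly supported, hence lie in $L^2(\mathbb{R}^d)$, and $\widehat{\mu_\epsilon}=\hat\mu\cdot\widehat{\phi_\epsilon}$ with $\widehat{\phi_\epsilon}(\xi)=\hat\phi(\epsilon\xi)$. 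The key elementary observation is that $|\widehat{\phi_\epsilon}(\xi)|\le\int\phi\,dx=1$ for every $\xi$, while $\widehat{\phi_\epsilon}(\xi)\to\hat\phi(0)=1$ pointwise as $\epsilon\to0$.

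For part (1), assume $\hat\mu\in L^2$. By Plancherel, $\|\mu_\epsilon\|_2^2=\int|\hat\mu|^2|\widehat{\phi_\epsilon}|^2\,d\xi\le\|\hat\mu\|_2^2$, so the family $(\mu_\epsilon)$ is uniformly bounded in $L^2$; moreover $\|\mu_\epsilon-\mu_{\epsilon'}\|_2^2=\int|\hat\mu|^2|\widehat{\phi_\epsilon}-\widehat{\phi_{\epsilon'}}|^2\,d\xi\to0$ by dominated convergence (the integrand is dominated by $4|\hat\mu|^2$), so $(\mu_\epsilon)$ is Cauchy and converges to some $g\in L^2$. It remains to identify the limit: since $(\phi_\epsilon)$ is an approximate identity, $\mu_\epsilon\to\mu$ in the sense of distributions, and $L^2$ convergence also implies distributional convergence, whence $\mu=g\,dx$ with $g\in L^2$.

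For part (2), first note that $\mu$ being a finite measure forces $\hat\mu$ to be bounded, with $\|\hat\mu\|_\infty\le\mu(\mathbb{R}^d)$. Hence if $\hat\mu\in L^1$ then $\hat\mu\in L^1\cap L^\infty\subseteq L^2$, and part (1) yields $\mu=g\,dx$ with $g\in L^2$ and $\hat g=\hat\mu\in L^1$. Fourier inversion then identifies $g$ almost everywhere with $h(x):=\int_{\mathbb{R}^d}\hat\mu(\xi)e^{-2\pi ix\cdot\xi}\,d\xi$, and $h$ is continuous because the integrand depends continuously on $x$ and is dominated by the fixed $L^1$ function $|\hat\mu|$; thus $\mu=h\,dx$ with $h\in\mathcal{C}(\mathbb{R}^d)$ (in fact $h\in\mathcal{C}_0$ by Riemann--Lebesgue).

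I expect the main obstacle to be the identification of the limit in part (1): one must reconcile the pointwise integral definition of $\hat\mu$ with the distributional Fourier transform and argue that the $L^2$ limit of the mollifications represents the same object as $\mu$. This rests on the injectivity of the Fourier transform on tempered distributions together with the convergence of approximate identities, and it is precisely this step that will demand the most care when the argument is reproduced over a general locally compact group carrying a Plancherel measure, where the inversion theory is considerably more delicate.
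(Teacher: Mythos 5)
Your proof is correct. Note that the paper does not actually prove this Euclidean statement itself (it cites Mattila); what it does prove is the group-theoretic generalization, Theorem \ref{smoothmeasure}, so that is the natural point of comparison. For part (1) you and the paper are in the same circle of ideas (mollification plus Plancherel), with one structural difference: the paper first uses \emph{surjectivity} of the Plancherel isomorphism to produce $f\in L^2$ with $\hat f=\hat\mu$, then shows $\psi_\epsilon*\mu=\psi_\epsilon*f$ almost everywhere and passes to weak limits, whereas you never need surjectivity --- you show the mollifications $\mu*\phi_\epsilon$ form a Cauchy family in $L^2$ (isometry plus dominated convergence, exploiting the dilation identity $\widehat{\phi_\epsilon}(\xi)=\hat\phi(\epsilon\xi)\to1$, whose group analogue is the paper's Lemma \ref{identity}) and identify the limit distributionally. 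For part (2) the routes genuinely diverge: the paper defines the candidate density directly by the inversion integral $h(p)=\int_{\hat{G}}\tr(\hat\mu(\chi)\chi(p)^*)d\rho(\chi)$, proves its continuity, and shows $\mu_\epsilon(p)\to h(p)$ by dominated convergence together with the operator-norm convergence $\hat\psi_\epsilon\to I$; you instead reduce (2) to (1) via $\|\hat\mu\|_\infty\le\mu(\Rd)$, hence $\hat\mu\in L^1\cap L^\infty\subset L^2$, and then upgrade the $L^2$ density to a continuous one by classical $L^1$ inversion (in fact, since your $g$ is supported in a compact set, $g\in L^1$ and the textbook inversion theorem applies verbatim). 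Your reduction is cleaner in $\Rd$ and does transfer to the group setting --- the operator analogues are $\|\hat\mu(\chi)\|_{op}\le\mu(G)$ and the Schatten inequality $\|A\|_{\Sa_2}^2\le\|A\|_{op}\|A\|_{\Sa_1}$ --- but the final continuity step still requires the same inversion-formula machinery the paper leans on, so the two arguments ultimately meet there; what the paper's direct route buys is that it never detours through the $L^2$ theory in part (2), while yours buys a shorter, more elementary proof in the Euclidean case.
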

 This result has been successfully used in answering several important questions in geometric measure theory in $\Rn$. Marstrand-Mattila projection theorem (\cite{Marstrand}, \cite{Mattila3}), dimension of intersections of general sets (\cite{Mattila6},\cite{Falconer4}), and the distance set problem (\cite{Erdogan}), are a few example of problems where Theorem \ref{Th2} has played a crucial role.
A proof of this theorem can be found in \cite[pp. 31]{Mattila1}. 

In this work the aim is to establish a connections between the Fourier transform and the geometry of measures in the Heisenberg group.%

The main difficulty of Fourier theory in $\Hn$ is that the Fourier transform of an integrable function is not a complex valued function but an operator-valued map from $\R\setminus\{0\}$ to $\BL$, the space of bounded operators on $L^2(\Rn)$. Nevertheless, it is possible to obtain analogues of many of the classical results. For instance an appropriate analogues of Plancherel's theorem and the inversion formula for the Schwartz class hold. 

Another important result is the analogue of  \eqref{Eplanch},
\begin{theorem}\label{prop1}
If $f$ is in the Schwartz class of $\Hn$, $\mu$ is a compactly supported, positive, finite, Radon measure on $\Hn$, and $d\varrho(\lambda)=\frac{|\lambda|^n}{(2\pi)^{n+1}}d\lambda$, then
\begin{equation}
\int_{\Hn}f(p)d\mu(p)=\int_{\R^*}tr(\muhat(\lambda)^*\fhat(\lambda))d\varrho(\lambda).
\end{equation}
\end{theorem}
Here, $tr(T)$ denotes the trace of the operator $T$, and $T^*$ denotes its adjoint. 
Exploiting some of these properties we obtain the following analogue of Theorem \ref{Th2}.
\begin{proposition}\label{prop2}
Suppose $\mu$ is a compactly supported, positive, finite, Radon measure on $\Hn$. Then,
\begin{enumerate} [(i)]
\item If $\muhat\in L^2(\Sa_2)$, then $d\mu=f dp$ with $f \in L^2(\Hn)$.
\item If $\muhat\in L^1(\Sa_1)$, then $d\mu=g dp$ with $g \in \mathcal{C}(\Hn)$,
\end{enumerate}
where $L^p(\Sa_p)$ is an appropriately defined, non-commutative, $L^p$-space. 
\end{proposition}
However, one of the consequences of the operator nature of the Fourier transform is that it makes it quite difficult to characterize the image of certain classes of functions. For instance, one would like to know the image of the Schwartz class, $\Sw(\Hn)$, under the group Fourier transform. This was achieved by D. Geller in \cite{Geller}, where he obtained a characterization in terms of asymptotic series. It is not simple, however, to use this characterization to extend the Fourier transform to distributions. Recently in \cite{BCD1}, the authors proposed an alternative frequency space for the Heisenberg group, and with it, an alternative definition of the Fourier transform whereby the Fourier transform of an integrable function is now a continuous function defined on this frequency space. Later,  in \cite{BCD2}, the authors were able characterize the image of $\Sw(\Hn)$ under this new ``Fourier transform", and extend the definition to tempered distributions. In this work, this approach is used to obtain a Heisenberg group analogue of \eqref{FEnergy}, where now 
\[I_{\sigma}(\mu):=\int_{\Hn}\int_{\Hn}||q^{-1}p||_{\Hn}^{-\sigma}d\mu(q)d\mu(p).\]
 The result reads,
\begin{proposition}\label{FHenergy}
Let $\mu\in\Ma(\Hn)$, then there is a constant $d(n,\sigma)$ depending only on $n$ and $\sigma$ such that,
\begin{equation}
I_{\sigma}(\mu)=d(n,\sigma)\int_{\widehat{\He}^n}\mathfrak{K}_{Q-\sigma}(\alpha,\lambda)|\widehat{\mu}_{\Hn}(\zeta)|^{2}d\zeta.
\end{equation}
\end{proposition}
Here, $Q=2n+2$ is the Hausdorff dimension of $\Hn$, $\widehat{\He}^n$ denotes the aforementioned frequency space, $d\zeta$ is an appropriately defined measure, and $\mathfrak{K}_{Q-\sigma}$ is related to the distributional Fourier transform of $K_s(p)=||p||_{\Hn}^{-s}$ in the sense of \cite{BCD1}.
 %%%%%%%%%%%%%%%%%%%%%%%%%%%%%%%%%%%%%%%%%%%%%
  %%%%%%%%%%%%%%%%%%%%%%%%%%%%%%%%%%%%%%%%%%%%%
 %%%%%%%%%%%%%%%%%%%%%%%%%%%%%%%%%%%%%%%%%%%%%
 %%%%%%%%%%%%%%%%%%%%%%%%%%%%%%%%%%%%%%%%%%%%%
 %%%%%%%%%%%%%%%%%%%%%%%%%%%%%%%%%%%%%%%%%%%%%
 %%%%%%%%%%%%%%%%%%%%%%%%%%%%%%%%%%%%%%%%%%%%%
 %%%%%%%%%%%%%%%%%%%%%%%%%%%%%%%%%%%%%%%%%%%%%
 %%%%%%%%%%%%%%%%%%%%%%%%%%%%%%%%%%%%%%%%%%%%%
 %%%%%%%%%%%%%%%%%%%%%%%%%%%%%%%%%%%%%%%%%%%%%
 %%%%%%%%%%%%%%%%%%%%%%%%%%%%%%%%%%%%%%%%%%%%%
 %%%%%%%%%%%%%%%%%%%%%%%%%%%%%%%%%%%%%%%%%%%%%
 %%%%%%%%%%%%%%%%%%%%%%%%%%%%%%%%%%%%%%%%%%%%%
 %%%%%%%%%%%%%%%%%%%%%%%%%%%%%%%%%%%%%%%%%%%%%
 %%%%%%%%%%%%%%%%%%%%%%%%%%%%%%%%%%%%%%%%%%%%%

\section{Group Fourier transform in the Heisenberg group}
The Heisenberg group is the manifold $\Hn=\R^{2n}\times\R=\R^n\times\R^n\times\R$ with the group law $(z, t)(z', t')=(z+z', t+t'-\frac{1}{2}\omega(z,z'))$, where $\omega$ denotes the standard symplectic form. That is, if $z=(x,y), z'=(x', y')$ then $\omega(z, z')=x\cdot y'-y\cdot x'$. This group law turns $\Hn$ into a locally compact, non-Abelian Lie Group with respect to the usual topology in $\R^{2n+1}$. The left invariant vector fields are 
\[X_j:=\frac{\partial}{\partial x_j}-\frac{y_j}{2}\frac{\partial}{\partial t}; Y_j:=\frac{\partial}{\partial y_j}+\frac{x_j}{2}\frac{\partial}{\partial t}, \text{ for } j=1,\ldots,n,\]
\[\text{and}, T:=\frac{\partial}{\partial t}.\]
The center of $\Hn$ is the t axis, $\{(0,t):t\in\R\}$. One can check that up to multiplication by a non-zero constant, the Haar measure on $\Hn$ is the $(2n+1)$-dimensional Lebesgue measure, therefore, for $p\in [0,\infty]$, the space $L^p(\Hn)$ coincides with the space $L^p(\R^{2n+1})$. Moreover, since the differential structure of $\Hn$ is that of $\R^{2n+1}$ differentiable functions on $\Hn$ are those that are differentiable in the $\R^{2n+1}$ sense. In particular, the Schwartz class in $\Hn$, here denoted by $\mathscr{S}(\Hn)$, is simply $\mathscr{S}(\R^{2n+1})$. The Koranyi gauge, $||(z,t)||_{\Hn}^4=|z|^4+16t^2$, induces a left invariant metric, known as the Koranyi metric, given by $d_{\Hn}(p,q)=||q^{-1}p||_{\Hn}$. For any $r>0$, the map $\delta_r(z,t)=(rz,r^2t)$ is a group automorphism and is homogeneous of degree 1 with respect to the left invariant metric $d_{\Hn}$.  Much more can be said about the structure of the Heisenberg group but much of it will not be of great relevance to us. For a detailed introduction to the Heisenberg group, readers are referred to \cite{CDPT}.\\

As for the general theory of Fourier analysis on groups, in order to introduce the Fourier transform, one first needs a complete description of the representations of the group. The representation theory of $\Hn$ is relatively simple. Denote by $\UL$ the group of unitary operators on $L^2(\R^n)$. For each $\lambda\in\R^*:=\R\setminus\{0\}$, the map
\[\rhol:\Hn\to \mathcal{U}(L^2(\R^n))\]
given by \[\rhol(x,y,t)\varphi(\xi)=e^{i\lambda (t+x\cdot\xi+\frac{xy}{2})}\varphi(\xi+y),\quad (\varphi\in L^2(\R^n)),\] 
 is a strongly continuous, unitary representation of the Heisenberg group. Moreover, one can check that these representations are irreducible, and  by Stone-von Neumann Theorem (\cite{Stone}, \cite{VonN}), up to unitary equivalence, these are all the representations of $\Hn$ that are non-trivial at the center. Details about this can be found in many places, see for instance \cite[Section 1.2]{Tangavelu1}, or \cite[Section 1.5]{Folland2}.

This allows us to introduce the Fourier transform of integrable functions in $\Hn$. Denote by $\BL$ the space of bounded operators on $L^2(\Rn)$, then the Fourier transform of $f\in L^1(\Hn)$ is the operator-valued function 
\[\F(f)(\lambda)=\fhat(\lambda):\R^*\to\BL,\]
given by
\begin{equation}
\fhat(\lambda)\varphi=\int_{\Hn}f(p)\rhol(p)\varphi dp.
\end{equation}
The integral is in the Bochner sense, that is to say $\fhat(\lambda)\varphi$ is the function such that for every $\psi\in L^2(\Rn)$,
\[\langle \fhat(\lambda)\varphi,\psi\rangle_{L^2(\Rn)}=\int_{\Hn}f(p)\langle\rhol(p)\varphi , \psi \rangle_{L^2(\Rn)}dp.\]

It is not hard to see that for each $\lambda$, $\fhat(\lambda)$ is indeed bounded. In fact, if we denote by $||\cdot||_{op}$ the operator norm in $\BL$, we have $||\fhat(\lambda)||_{op}\leq ||f||_1$. As expected, the Fourier transform takes convolutions to ``products", where now the product is composition of operators. Since neither the group law, nor composition of operators are commutative one has to be consistent with the order in which the functions are placed in the convolution. Here,
 \[f*g(p)=\int_{\Hn}f(pq^{-1})g(q)dq,\]
so that 
\[\widehat{f*g}(\lambda)=\fhat(\lambda)\widehat{g}(\lambda).\]
As mentioned before, the group Fourier transform shares many properties with the classical Fourier transform in $\Rn$. In what follows some of this properties will be discussed, many of them follow from the more general theory of Fourier theory on groups but for the sake of completeness Heisenberg-specific proofs are included.\\

 As one expects from the Fourier transform, depending on the regularity of $f$, one can obtain more than just boundedness of the operators $\fhat(\lambda)$. To state this formally, I first need to introduce more notation.

A compact operator $T$ is said to be in the Schatten p-class, denoted $S_p(L^2(\Rn))$, if its singular values are in $\ell_p$. That is
\[tr[(TT^*)^{p/2}]<\infty.\]
The Schatten p-norm of $T\in \SPL$ is $||T||_{S_p}=Tr[(TT^*)^{p/2}]^{1/p}$. Note that $S_1$ is the trace class and $S_2$ is the class of Hilbert-Schmidt operators. 

Now, let $\varrho$ be the measure on $\R^*$ given by
\[d\varrho(\lambda)=\frac{|\lambda|^n}{(2\pi)^{n+1}},\]
and denote by $L^p(\R^*,\SPL; d\varrho)$ the space of $\SPL$-valued functions on $\R^*$ which are $\varrho$ measurable and $\int_{\R*}||T(\lambda)||_{S_p}^pd\varrho(\lambda)<\infty$.

The spaces $L^p(\R^*,\SPL; d\varrho)$ are examples of non-commutative $L^p$ spaces. In particular, the space $L^2(\R^*,S_2(L^2(\Rn)); d\varrho)$ is a non-commutative Hilbert space with inner product
\[\langle T, S \rangle_{L^2(S_2)}=\int_{\R^*}tr(T(\lambda)S(\lambda)^*)d\varrho(\lambda).\]

The simplified notation $L^p(S_p)$ is used to denote these spaces. With these definitions in place, we can continue our discussion by drawing parallels between the Fourier transform in $\Hn$ and the classical Euclidean theory. For instance, one very important property of the classical Fourier transform is Plancherel's theorem. Turns out, there is a non-commutative analogue that holds in $\Hn$.

\begin{theorem}[Plancherel Theorem]\label{Plancherel}
If $f\in L^1(\Hn)\cap L^2(\Hn)$ then $\fhat\in L^2(\Sa_2)$ with 
\begin{equation}\label{Parseval}
||f||_2=||\fhat||_{L^2(\Sa_2)}.
\end{equation}
Moreover, if $f$ and $g$ are in $L^1(\Hn)\cap L^2(\Hn)$ then 
\begin{equation}\label{polarized}
\int_{\Hn}f(p)g(p)dp=\int_{\R^*}tr(\widehat{g}(\lambda)^*\fhat(\lambda)d\varrho(\lambda)
\end{equation}
\end{theorem}

\begin{proof}
The first thing to note is that $\fhat(\lambda)$ is an integral operator in $L^2(\Rn)$, with kernel given by
\[T_f^{\lambda}(\eta,\xi)=\int_{\Rn}f^{\lambda}(x,\eta-\xi)e^{i\lambda x\dotp(\frac{\xi+\eta}{2})}dx,\]
where \[f^{\lambda}(z)=\int_{\R}f(z,t)e^{i\lambda t}dt.\] Therefore, $||\fhat(\lambda)||_{S_2}^2=||T_f^{\lambda}||^2_{L^2(\Rn\times\Rn)}.$ For $g\in L^2(\R^{2n})$, denote by $\mathcal{F}_E^jg$ the Euclidean Fourier transform of $g$ in the first j coordinates. Applying change of variables and Euclidean Plancherel theorem, we have

\begin{equation}\label{eq1}
\begin{split}
||T_f^{\lambda}||^2_{L^2(\R^{2n})}&=\int_{\Rn}\int_{\Rn}(2\pi)^{n/2}|\mathcal{F}_E^nf^{\lambda} \left(\frac{\lambda(\eta+\xi)}{2},\eta-\xi\right)|^2d\eta d\xi\\
&=\left(\frac{2\pi}{|\lambda|}\right)^n\int_{\R}\int_{\R}|f^{\lambda}(\eta, \xi)|^2d\eta d\xi
\end{split}
\end{equation}

Recalling the definition of $f^{\lambda}$, it follows by Plancherel theorem in $\R$ that \[\int|f^{\lambda}(z)|^2d\lambda=2\pi\int|f(z,t)|^2dt.\]
Therefore, the claim follows by integrating both sides of (\ref{eq1}) with respect to $\varrho$.
Equation (\ref{polarized}) follows by polarizing (\ref{Parseval}).

\end{proof}

Since $L^1(\Hn)\cap L^2(\Hn$) is dense in $L^2(\Hn)$, Plancharel theorem extends to all of $L^2(\Hn)$ making the Fourier transform a Hilbert space isomorphism between $L^2(\Hn)$ and $L^2(\Sa_2)$. 

Another important aspect of the Euclidean Fourier transform is its interaction with the Schwartz class. The space $\Sw(\Hn)$ coincides with $\Sw(\R^{2n+1})$. As expected, the regularity of functions in $\Sw(\Hn)$ translate as regularity of their Fourier transform (this will be made much more precise later) and allows the following inversion formula analogous to the Euclidean one. 

\begin{theorem}\label{SwTrace}

\begin{enumerate}[(i)]\text{ }

\item If $f\in\Sw(\Hn)$ then $\fhat\in L^p(\Sa_p)$, for $1\leq p\leq \infty$.
\item If $f\in\Sw(\Hn)$, then 
\begin{equation}\label{inversion}
f(p)=\int_{\R^*}tr(\rhol(p)^*\fhat(\lambda))d\varrho(\lambda).
\end{equation}

\end{enumerate}

\end{theorem}

\begin{proof}
\begin{enumerate}
\item 
Fist, it is clear that $f\in L^2(\Hn)$, so that $\fhat(\lambda)\in S_2$. In particular, $\fhat(\lambda)$ is compact. Since for all $1<p\leq\infty$, $S_p\subset S_1$, it is enough to show $\fhat\in L^1(\Sa_1)$. That is, $\int_{\R^*}tr(|\fhat(\lambda)|^{1/2})d\varrho(\lambda)<\infty$. 
If $\{e_j\}_{j\in\mathbb{N}}$ is an orthonormal basis of $L^2(\Rn)$, and $s_j(A)$ denotes the $j^{th}$ singular value of $A$. The singular values satisfy
\begin{equation}\label{snumbers}
\sum_{j=1}^{\infty}s_j(A)\leq\sum_{i,j=1}^{\infty}|\langle Ae_i , e_j \rangle_{L^2(\Rn)}|.
\end{equation}
 Let $\{\pla\}_{\alpha\in\mathbb{N}^n}$ be the orthonormal basis of $L^2(\Rn)$ comprised of the re-scaled multidimensional Hermite functions (discussed in detail at the beginning of Section \ref{FCoeffH}). For $\fhat(\lambda)$, equation (\ref{snumbers}) re-writes as
 \begin{equation}\label{boundonsnumbers}
 \sum_{\alpha\in\mathbb{N}^n}s_{\alpha}(\fhat(\lambda))\leq \sum_{\alpha,\beta}|\langle \fhat(\lambda)\pla , \plb \rangle_{L^2(\Rn)}|.
 \end{equation}
 Since $||\fhat(\lambda)||_{S_p}=\sum_{\alpha\in\mathbb{N}^n}s_{\alpha}(\fhat(\lambda))$, it is enough to show that the right hand side of (\ref{boundonsnumbers}) is integrable with respect to $\varrho$. This follows from Lemma 2.1 in \cite{BCD2} which reads,
 \begin{lemma}\label{decayF}
 For any $q\in\mathbb{N}$, there exist $N_q\in\mathbb{N}$ and constant $C_q>0$ such that
 \begin{equation}\label{decay}
 \left[ 1+|\lambda|(|\alpha|+|\beta|+n)+|\alpha-\beta| \right]^q|\langle \fhat(\lambda)\pla,\plb \rangle_{L^2(\Rn)}|\leq C_q||f||_{N_q,\Sw},
 \end{equation}
 where $||f||_{N_q,\Sw}$ is the classical family of Schwartz semi-norms,
 \[||f||_{N,\Sw(\Hn)}:=\sup_{|\alpha|\leq N}||(1+|z|^2+t^2)^{N/2}\partial^{\alpha}_{z,t}f||_{L^{\infty}(\Hn)}.\]
 \end{lemma}
 As mentioned before, it follows directly that
 \begin{align*}
 ||\fhat||_{L^1(\Sa_1)}&=\int_{\R^*}tr(|\fhat(\lambda)|)d\varrho(\lambda)\\
 &\leq\int_{\R^*} \sum_{\alpha,\beta}|\langle \fhat(\lambda)\pla,\plb \rangle_{L^2(\Rn)}| d\varrho(\lambda)<\infty
 \end{align*}
 
 \item 
To prove the inversion formula first note that taking $p=1$ in part (i), we get that (\ref{inversion}) is well defined. Since $\fhat(\lambda)$ is an integral operator, it is clear that so is $\rhol(p)^*\fhat(\lambda)$. The kernel of  $\rhol(p)^*\fhat(\lambda)$ is \[K_f^{\lambda,p}=e^{-i\lambda t}e^{i\lambda(\frac{x\dotp y}{2}-x\dotp\xi)}T_f^{\lambda}(\eta, \xi-y),\]
where $p=(x,y,t)$. The formula for the trace of integral operators (i.e $tr(\rhol(p)^*\fhat(\lambda))=\int_{\Rn}K_f^{\lambda,p}(\eta, \eta)d\eta$ - see \cite{Brislawn}), gives
\begin{equation}
\begin{split}
&tr(\rhol(p)^*\fhat(\lambda))\\&=e^{-i\lambda t}\int_{\Rn}e^{-i\lambda x\dotp( \eta+\frac{y}{2})}\int_{\Rn}f^{\lambda}(u,y)e^{i\lambda u\dotp (\eta-\frac{y}{2})}dud\eta\\
&=\frac{(2\pi)^n}{|\lambda|^n}e^{-i\lambda t}f^{\lambda}(x,y).
\end{split}
\end{equation}
Here, the last equality follows by using frequency modulation and Fourier inversion formula in Euclidean space, together with the change of variables $\eta\to\frac{\eta}{\lambda}$. Integrating both sides with respect to $d\varrho(\lambda)$ and using Euclidean Fourier inversion in the $\lambda$ variable, completes the proof.
 \end{enumerate}
\end{proof}

In analogy with Euclidean space, if $\mu\in\Ma(\Hn)$ one defines its Fourier transform as the $\BL$-valued map $\muhat:\R^*\to\BL$ given by
\begin{equation}\label{FofMeas}
\muhat(\lambda)\varphi=\int_{\Hn}\rhol(p)\varphi d\mu(p).
\end{equation}
This is a specific case of the more general theory of characteristic functions of measures on locally compact groups (see for instance H. Heyer \cite{Heyer} and E. Siebert \cite{Siebert}). It is not hard to see that $\muhat(\lambda)$ is indeed bounded with $||\muhat(\lambda)||_{op}\leq\mu(\Hn).$ Many properties of the Fourier transform of functions also extend to measures. For instance, group convolutions are defined as usual by
\[f\ast\mu(p)=\int_{\Hn}f(pq^{-1})d\mu(q),\]
and their Fourier transform is a composition of operators
\[\widehat{f\ast\mu}=\fhat(\lambda)\muhat(\lambda).\]

In studying the group Fourier transform of measures, it is extremely useful to use convolution approximations. Approximations to the identity in the convolution algebra $L^1(\Hn)$ coincide with classical approximations to the identity in $L^1(\Rn)$. The following properties are easy to check

\begin{lemma}\label{PropOfAI}
Let $\{\psi_{\epsilon}\}_{\epsilon>0}$ be an approximation to the identity in $L^1(\Hn)$.
\begin{enumerate}
\item $\psi_{\epsilon}\to\delta_{0}$ in the weak sense.
\item If $f\in L^1(\Hn)$, $\psi_{\epsilon}\ast f\to f$ in $L^1(\Hn)$.
\item If $g\in\mathcal{C}(\Hn)$ is bounded,  $\psi_{\epsilon}\ast g\to g$ point-wise.
\item If $\mu\in\Ma(\Hn),\ \psi_{\epsilon}\ast\mu\to\mu$ in the weak sense. 
\end{enumerate}
\end{lemma}
Here, $\delta_0$ denotes the Dirac distribution. It is possible, by way of \eqref{FofMeas}, to explicitly compute $\widehat{\delta}_0$. For $\varphi\in L^2(\Rn)$,
\[\widehat{\delta}_0(\lambda)\varphi=\int_{\Hn}\rhol(p)\varphi d\delta_0(p)=\rhol(0)\varphi=\varphi.\]
That is, $\widehat{\delta}_{0}=I$, the identity operator. One expects that the weak convergence of approximations to the identity, $\psi_{\epsilon}$,  implies some form of convergence of $\psihat_{\epsilon}$ to $I$. This is indeed the case

\begin{lemma}
Let $\{\psi_{\epsilon}\}$ be an approximation to the identity. Then for each $\lambda$,
 $\widehat{\psi}_{\epsilon}(\lambda)\to I$, in the strong operator topology.
 \end{lemma}
 Note that a corollary of this proposition, and the convolution theorem for measures, is that if $\mu\in\Ma(\Hn)$, the function $\mu_{\epsilon}=\psi_{\epsilon}*\mu$ satisfies $\widehat{\mu_{\epsilon}}(\lambda)\to \muhat(\lambda)$ in the strong operator topology.
 \begin{proof}
 Fix $\lambda\in\R^*$ and $\varphi\in L^2(\Rn)$. The aim is to show that 
 \[||(\widehat{\psi}_{\epsilon}(\lambda)-I)\varphi||_{L^2(\Rn)}\to 0 \text{ as } \epsilon\to0.\]
 Computing $||(\widehat{\psi}_{\epsilon}(\lambda)-I)\varphi||_{L^2(\Rn)}^2$ yields,
 \begin{equation}\label{SOTnorm}
 \begin{split}
&||(\widehat{\psi}_{\epsilon}(\lambda)-I)\varphi||_{L^2(\Rn)}^2 =\langle(\widehat{\psi}_{\epsilon}(\lambda)-I)\varphi, (\widehat{\psi}_{\epsilon}(\lambda)-I)\varphi \rangle_{L^2(\Rn)}\\
&= \langle \widehat{\psi}_{\epsilon}(\lambda)\varphi, \widehat{\psi}_{\epsilon}(\lambda)\varphi \rangle_{L^2(\Rn)} - \langle\varphi, \widehat{\psi}_{\epsilon}(\lambda)\varphi\rangle_{L^2(\Rn)} - \langle \widehat{\psi}_{\epsilon}(\lambda)\varphi, \varphi \rangle_{L^2(\Rn)} +\langle \varphi, \varphi \rangle_{L^2(\Rn)}\\
&=\int_{\Hn}\psi_{\epsilon}(p)\left(\int_{\Hn}\psi_{\epsilon}(q)\langle \rhol(p)\varphi, \rhol(q)\varphi \rangle_{L^2(\Rn)} dq \right)dp\\
&-\int_{\Hn}\psi_{\epsilon}(p)\langle \rhol(p)\varphi, \varphi \rangle_{L^2(\Rn)} dp-\int_{\He}\psi_{\epsilon}(p)\langle \varphi, \rhol(p)\varphi\rangle_{L^2(\Rn)} dp+\langle \varphi, \varphi\rangle_{L^2(\Rn)}.
\end{split}
 \end{equation}
Since $\rhol$ is a strongly continuous representation, all the integrands are continuous and bounded therefore, using the fact that $\psi_{\epsilon}\to\delta_0$ weakly, one sees that as $\epsilon\to 0$, the right hand side of \eqref{SOTnorm} converges to $\langle \varphi, \varphi \rangle_{L^2(\Rn)}-\langle \varphi, \varphi \rangle_{L^2(\Rn)}-\langle \varphi, \varphi \rangle_{L^2(\Rn)}+\langle \varphi, \varphi \rangle_{L^2(\Rn)}=0$. This completes the proof.
\end{proof}
As mentioned in the introduction, a quick consequence of the extension of $\mathcal{F}_{E}$ to distributions is that if $f\in\Sw(\Rn)$ and $\mu\in\Ma(\Rn)$ then
\[\int_{\Rn}fd\mu=\int_{\Rn}\mathcal{F}_E(f)\overline{\mathcal{F}_E(\mu)}.\]
Since, as of now, there is no satisfactory extension of the group Fourier transform to the space of distributions, the analogous result requires additional work. The following lemma will be useful,

\begin{lemma}\label{lemmaB}
 Let $H$ be a Hilbert space and for each $\epsilon>0$, $T_{\epsilon},T\in S_1( H)$. Suppose that $T_{\epsilon}\to T$ in the strong operator topology, then there is a subsequence $\{T_k\}_{k\in\mathbb{N}}\subset \{T_{\epsilon}\}_{\epsilon>0}$ such that $tr(T_k)\to tr(T)$ as $k\to\infty$.
 \end{lemma}

 \begin{proof}
 Let $\{e_j\}_{j\in\mathbb{N}}$ be a fixed orthonormal basis of $H$. Since $T_{\epsilon}\to T$ in the strong operator topology, it follows that for each $j$, $|\langle (T_{\epsilon}-T)e_j,e_j\rangle_{H}|\to0$ as $\epsilon\to0$. Therefore, we can obtain a subsequence $\{T_k\}_{k\in\mathbb{N}}$ such that for all $j,\  |\langle (T_k-T)e_j,e_j \rangle_{H}|\to0$ monotonically. Moreover, for each $k$, both $T_k$ and $T$ are traceable so $T_k-T$ is also traceable. In particular, by  Weyl's additive inequality,
 \[\sum_{j=1}^{\infty} |\langle (T_1-T)e_j,e_j \rangle_{H}|\leq tr(|T_1-T|)<\infty.\]
Now,
 \begin{align*}
 |tr(T_k)-tr(T)|&=|tr(T_k-T)|\\
&= |\sum_{\j=1}^{\infty}\langle (T_k-T)e_j,e_j \rangle_{H}|\\
& \leq \sum_{j=1}^{\infty} |\langle (T_k-T)e_j,e_j \rangle_{H}|.
 \end{align*}
 So the proof is completed by appealing to the decreasing monotone convergence theorem.
 \end{proof}

\begin{theorem}\label{Fmu}
Let $f\in\Sw(\Hn)$ and $\mu\in\Ma(\Hn)$, then
\begin{equation}
\int_{\Hn}f(p)d\mu(p)=\int_{\R^*}tr(\fhat(\lambda)\muhat(\lambda)^*)d\varrho(\lambda)
\end{equation}
\end{theorem} 
\begin{proof}
Let $\{\psi_{\epsilon}\}_{\epsilon>0}\in\mathcal{C}^{\infty}_c(\Hn)$ be a compactly supported, non-negative, smooth approximation to the identity, and put $\mu_{\epsilon}=\psi_{\epsilon}*\mu$. It is clear that $\mu_{\epsilon}$ is a compactly supported smooth function, in particular $\mu_{\epsilon}\in\Sw(\Hn)$. Also, $\mu_{\epsilon}\to\mu$ weakly, and for each $\lambda$, $\widehat{\mu}_{\epsilon}(\lambda)\to\muhat(\lambda)$ in the strong operator topology. Therefore, $\fhat(\lambda)\widehat{\mu}_{\epsilon}(\lambda)^*\to\fhat(\lambda)\muhat(\lambda)^*$ in the strong operator topology.
 Since $f\in\Sw(\Hn)$, then for each $\lambda\in\R^*$, $\fhat(\lambda)\in S_1(L^2(\Rn))$. It follows that for each $\epsilon>0, \lambda\in\R^*$, $\fhat(\lambda)\widehat{\mu}_{\epsilon}(\lambda)^*$, and $\fhat(\lambda)\muhat(\lambda)^*$ are in $S_1(L^2(\Rn))$.
By Lemma \ref{lemmaB}, there is a subsequence $\{\mu_k\}_{k\in\mathbb{N}}\subset\{\mu_{\epsilon}\}_{\epsilon>0}$ such that
\begin{enumerate}[(i)]
\item $\mu_k\to\mu$ weakly,
\item $\muhat_k(\lambda)\to\muhat(\lambda)$ in the strong operator topology, and
\item $tr(\fhat(\lambda)\muhat_k(\lambda)^*)\to tr(\fhat(\lambda)\muhat(\lambda)^*)$.
\end{enumerate}  
By Theorem \ref{Plancherel} we have,
\begin{equation}\label{eq3}
\begin{split}
\int_{\Hn}f(p)d\mu(p)&=\lim_{k\to\infty}\int_{\Hn}f(p)\mu_k(p)dp\\ 
&=\lim_{k\to\infty}\int_{\R^*}tr(\fhat(\lambda)\widehat{\mu}_k(\lambda)^*)d \varrho(\lambda)
\end{split}
\end{equation}
Now, by the additive Weyl's inequality, we have 
\begin{align*}
|tr(\fhat(\lambda)\widehat{\mu}_k(\lambda)^*)|&\leq tr(|\fhat(\lambda)\widehat{\mu}_k(\lambda)^*|)\\
& \leq ||\widehat{\mu}_k||_{op}||\fhat(\lambda)||_{S_1}\\
&\leq ||\widehat{\psi}_k(\lambda)||_{op}||\muhat(\lambda)||_{op}||\fhat(\lambda)||_{S_1}\\
&\leq \mu(\Hn)||\fhat(\lambda)||_{S_1}.
\end{align*}
By Theorem \ref{SwTrace}, since $f\in\Sw(\Hn)$, $\fhat\in L^1(\Sa_1)$, so by the dominated convergence theorem, the limit can be brought inside the integral on the right hand side of (\ref{eq3}) to complete the proof.

\end{proof}

With this at hand, we are ready to prove Proposition \ref{prop2},

\begin{repproposition}{prop2}
Let $\mu\in\Ma(\Hn)$.
\begin{enumerate} [(i)]
\item If $\muhat\in L^2(\Sa_2)$, then $d\mu=f dp$ for $f \in L^2(\Hn)$.
\item If $\muhat\in L^1(\Sa_1)$, then $d\mu=g dp$ for $g \in \mathcal{C}(\Hn)$.
\end{enumerate}
\end{repproposition}

\begin{proof}
\begin{enumerate}[(i)]
\item Since the Fourier transform is an isometric isomorphism between $L^2(\Hn)$ and $L^2(\Sa_2)$, it follows that $\exists\ f\in L^2(\Hn)$ such that $\fhat(\lambda)=\muhat(\lambda)$ for $ \varrho-a.e.\ \lambda$. The aim is to show that this $f$ is the density of $\mu$. As before, let $\{\psi_{\epsilon}\}_{\epsilon>0}\subset\mathcal{C}^{\infty}_{c}$ be an approximation to the identity and put $\mu_{\epsilon}=\psi_{\epsilon}*\mu,$ and $f_{\epsilon}=\psi_{\epsilon}*f$. By the convolution theorem,
\[\widehat{\mu}_{\epsilon}=\widehat{\psi}_{\epsilon}\muhat=\widehat{\psi}_{\epsilon}\fhat=\widehat{f}_{\epsilon},\  \varrho-\text{almost everywhere },\] 
and since $\mu_{\epsilon}, f_{\epsilon}\in L^2(\Hn)$, one has that $\mu_{\epsilon}=f_{\epsilon}$ almost everywhere. Let $\psi$ be a continuous bounded function on $\Hn$, then
\begin{align*}
\int_{\Hn}\psi(p)d\mu(p)&=\lim_{\epsilon\to0}\int_{\Hn}\psi(p)\mu_{\epsilon}(p)dp\\
&=\lim_{\epsilon\to0}\int_{\Hn}\psi(p)f_{\epsilon}(p)dp\\
&=\int_{\Hn}\psi(p)f(p)dp,
\end{align*}
which proves the claim.
\item If $\muhat\in L^1(\Sa_1)$ then, $|tr( \rhol(p)^*\muhat(\lambda))|\leq tr(| \rhol(p)^*\muhat(\lambda)|)\leq tr(|\muhat(\lambda)|)$. It follows that 
\[g(p):=\int_{\R^*}tr( \rhol(p)^*\muhat(\lambda))d \varrho(\lambda)\]
is a well defined continuous function on $\Hn$. As before, the aim is to show that $g$ is the density of $\mu$. To see this, let $\mu_{\epsilon}$ be as before. Since $\mu_{\epsilon}\in\Sw(\Hn)$, $\widehat{\mu}_{\epsilon}(\lambda)\in S_1$ for each $\lambda$. Also, $\widehat{\mu}_{\epsilon}(\lambda)\to\muhat(\lambda)$ in the strong operator topology. So by Lemma \ref{lemmaB}, there is a subsequence $\{\mu_k\}$ such that
 \begin{enumerate}[(i)]
 \item $\mu_k\in\Sw(\Hn)$,
 \item $\mu_k\to \mu$ weakly,
 \item $\muhat_k(\lambda)\to\muhat(\lambda)$ in the strong operator topology,
 \item and $tr( \rhol(p)^*\muhat_k(\lambda))\to tr( \rhol(p)^*\muhat(\lambda)).$
\end{enumerate}
By Theorem $\ref{inversion}$,
\[\mu_k(p)=\int_{\R^*}tr( \rhol(p)^*\muhat_k(\lambda))d \varrho(\lambda),\]
but also, 
\[|tr(\rhol(p)^*\muhat_k(\lambda))|\leq||\rhol(p)^*||_{op}||\widehat{\psi}_k(\lambda)||_{op}tr(|\muhat(\lambda)|)\lesssim tr(|\muhat(\lambda)|)\in L^1(S_1).\]
By the dominated convergence theorem, we have a point-wise limit
\begin{align*}
\lim_{k\to\infty}\mu_k(p)&=\lim_{k\to\infty}\int_{\R^*}tr( \rhol(p)^*\muhat_k(\lambda))d \varrho(\lambda)\\
&=\int_{\R^*}tr( \rhol(p)^*\muhat(\lambda))d \varrho(\lambda)=g(p).
\end{align*}
Since, for all $k\in\N$, $||\psi_k||_{L^1(\Hn)}\leq 1$, it follows that pointwise convergence, implies weak convergence. That is, for any continuous bounded function $\psi$,

\[\int_{\Hn}\psi(p)d\mu(p)=\lim_{k\to\infty}\int_{\Hn}\psi(p)\mu_k(p)dp=\int_{\Hn}\psi(p)g(p)dp. \]
This completes the proof.
\end{enumerate}
\end{proof}
 %%%%%%%%%%%%%%%%%%%%%%%%%%%%%%%%%%%%%%%%%%%%%
  %%%%%%%%%%%%%%%%%%%%%%%%%%%%%%%%%%%%%%%%%%%%%
 %%%%%%%%%%%%%%%%%%%%%%%%%%%%%%%%%%%%%%%%%%%%%
 %%%%%%%%%%%%%%%%%%%%%%%%%%%%%%%%%%%%%%%%%%%%%
 %%%%%%%%%%%%%%%%%%%%%%%%%%%%%%%%%%%%%%%%%%%%%
 %%%%%%%%%%%%%%%%%%%%%%%%%%%%%%%%%%%%%%%%%%%%%
 %%%%%%%%%%%%%%%%%%%%%%%%%%%%%%%%%%%%%%%%%%%%%
 %%%%%%%%%%%%%%%%%%%%%%%%%%%%%%%%%%%%%%%%%%%%%
 %%%%%%%%%%%%%%%%%%%%%%%%%%%%%%%%%%%%%%%%%%%%%
 %%%%%%%%%%%%%%%%%%%%%%%%%%%%%%%%%%%%%%%%%%%%%
 %%%%%%%%%%%%%%%%%%%%%%%%%%%%%%%%%%%%%%%%%%%%%
 %%%%%%%%%%%%%%%%%%%%%%%%%%%%%%%%%%%%%%%%%%%%%
 %%%%%%%%%%%%%%%%%%%%%%%%%%%%%%%%%%%%%%%%%%%%%
 %%%%%%%%%%%%%%%%%%%%%%%%%%%%%%%%%%%%%%%%%%%%%
\section{Fourier Coefficients in $\Hn$}\label{FCoeffH}

In the Heisenberg group, energy integrals of measures take the form
\[I_{\sigma}(\mu)=\iint||q^{-1}p||_{\Hn}^{-\sigma}d\mu(q)d\mu(p).\]
The inner integral takes the form 
\begin{equation}\label{KorEn}
\mu\ast K_{\sigma}(p),
\end{equation}
 where ``$\ast$" denotes group convolution, and $K_{\sigma}$ denotes the Riesz kernel for the Koranyi norm, $K_{\sigma}(p)=||p||_{\Hn}^{-\sigma}$ for $0\leq\sigma<2n+2$. One would like to use the convolution theorem to compute energy integrals in the frequency domain. However, $K_{\sigma}$ is not in $L^p$ for any $p$, therefore first step in such a computation would be to extend the group Fourier transform to tempered distributions. This has been recently done by the authors in \cite{BCD1} and \cite{BCD2} where they introduced a frequency space for $\Hn$ and, with it, an alternative definition for the Fourier transform in the Heisenberg group.  This section will present this approach and all results that will be relevant to the proof of the energy formula. These will be presented without proof but all the details are contained in the original references.\\
The main idea behind these Fourier coefficients is, instead of studying the operator $\fhat$, to study its matrix coefficients. The matrix coefficients of an operator, are dependent on the choice of an orthonormal basis, however in this case, it turns out to be convenient to choose a basis comprised of  eigenfunctions for the Euclidean Fourier transform.

\begin{definition}[Hermite functions and re-scaled Hermite functions.]
For $x\in\R$ and $k\in\N$, the $k$-th Hermite polynomial is the polynomial
\[\mathscr{H}_k(x):=(-1)^{k}\left[\frac{d^k}{dx^k}\{e^{-x^2}\}e^{x^2}\right].\]
The (normalized) $k$-Hermite function is 
\[\phi_{k}(x):=(2^k\sqrt{\pi}k!)^{-\frac{1}{2}}e^{-\frac{x^2}{2}}\mathscr{H}_k(x).\]
In higher dimensions, for $x\in\Rn$ and multi-index $\alpha\in\N^n$,
\[\phi_{\alpha}(x):=\Pi_{j=1}^{n}\phi_{\alpha_j}(x_j).\]
Finally, the re-scaled Hermite functions are given for $\alpha\in\N^n,\ \lambda\in\R^*,$ and $x\in\Rn$, by
\[\pla(x):=|\lambda|^{n/4}\phi_{\alpha}(|\lambda|^{1/2}x).\]
\end{definition}

 Let $\Ht=\N^n\times\N^n\times\R^*$, and denote a typical point by $\zeta=(\alpha,\beta,\lambda)$. For $f\in L^1(\Hn)$ let $\mathcal{F}_{\Hn}(f):\Ht\to\C$ be the map
 \begin{equation}\label{AltF}
\mathcal{F}_{\Hn}(f)(\zeta)=\mathcal{F}_{\Hn}(f)(\alpha,\beta,\lambda):=\langle\fhat(\lambda)\pla,\plb\rangle_{L^2(\Rn)}.
 \end{equation}
 For simplicity, $\mathcal{F}_{\Hn}(f)$ is interchangeably denoted by $ \fhatH.$ This transform shares many important properties, or at least analogues thereof, with the classical Fourier transform. Some of these properties are what allowed the authors to extend it to tempered distribution by characterizing the image of $\Sw(\Hn)$. Many of these properties will be of great relevance later in this work when computing energies of measures via integrals on the frequency domain. Because of this, a brief introduction to this Fourier coefficient approach is given next. Many of the proofs are skipped, but readers are referred to the aforementioned references where all the proofs are presented and more details are given.
 
Perhaps the most clear advantage of this approach is that the function $\fhatH$ can be written as an integral of $f$ with respect to an appropriate kernel. Specifically, letting $\Phi:\Hn\times\Ht\to\C$ be given by
 \begin{equation}\label{PHI}
 \Phi(p,\zeta)=\langle\rhol(p)\pla,\plb \rangle_{L^2(\Rn)}=e^{i\lambda t}\int_{\Rn} e^{i\lambda x\dotp\xi}\pla(\xi+\frac{y}{2})\plb(\xi-\frac{y}{2})d\xi,
 \end{equation}
a quick computation using Fubini's theorem shows that
\begin{equation}\label{FInt}
\fhatH(\zeta)=\int_{\Hn}\Phi(p,\zeta)f(p)dp.
\end{equation} 
It is worth mentioning that the integral $\int_{\Rn} e^{i\lambda x\dotp\xi}\pla(\xi+\frac{y}{2})\plb(\xi-\frac{y}{2})d\xi$ is what is known as the Fourier-Wigner transform of $\pla$ and $\plb$. In general, the Fourier-Wigner transform of two functions $\varphi,\psi \in L^2(\Rn)$ is the function on $\Rnn$ given by
\[V_{\lambda}(f,g)(z):=\langle\rhol(z,0)f,g\rangle_{L^2(\Rn)}=\int_{\Rn}e^{i\lambda x\dotp\xi}f(\xi+\frac{y}{2})\overline{g(\xi-\frac{y}{2})}d\xi.\]

 This integral formulation of $\FH$ simplifies many computations. For instance, the inversion formula for $\Sw(\Hn)$ becomes much simpler. Before we state it, the space $\Ht$ must be endowed with a measure. For $\eta:\Ht\to\C$, let $d\zeta$ be the measure given by
\begin{equation}
\int_{\Ht}\eta(\zeta)d\zeta=\sum_{\alpha\in\N^n}\sum_{\beta\in\N^n}\int_{\R^*}\eta(\alpha,\beta,\lambda)d\rho(\lambda).
\end{equation}
One has, for $f\in\Sw(\Hn)$ ,
\[f(p)=\mathcal{F}_{\Hn}^{-1}(\fhatH)(p)=\int_{\Ht}\overline{\Phi(p,z)}\fhatH(\zeta)d\zeta.\]
One of the most important properties of the Fourier transform is its effect on convolutions. With this approach the convolution theorem recasts as follows,
\begin{equation}\label{coeffconv}
\mathcal{F}_{\Hn}(f*g)(\alpha,\beta,\lambda)=\sum_{\gamma\in\N^n}\fhatH(\gamma,\beta, \lambda)\widehat{g}_{\Hn}(\alpha,\gamma,\lambda)=:\fhatH\cdot \widehat{g}_{\Hn}(\alpha,\beta,\lambda).
\end{equation}
Yet another property of the map $\FH$ is that it turns smoothness into decay. This was precisely the content of Lemma \ref{decayF} which we now restate in the notation of this section.

\begin{lemma}
 For any $q\in\mathbb{N}$, there exist $N_q\in\mathbb{N}$ and constant $C_q>0$ such that
 \begin{equation}\label{decay2}
 \left[ 1+|\lambda|(|\alpha|+|\beta|+n)+|\alpha-\beta| \right]^q|\fhatH(\alpha,\beta,\lambda)|\leq C_q||f||_{N_q,\Sw},
 \end{equation}
 where $||f||_{N_q,\Sw}$ is the classical family of Schwartz semi-norms,
 \[||f||_{N,\Sw(\Hn)}:=\sup_{|\alpha|\leq N}||(1+|z|^2+t^2)^{N/2}\partial^{\alpha}_{z,t}f||_{L^{\infty}(\Hn)}.\]
\end{lemma}
This decay result motivates the definition of the metric $\dht$ on $\Ht$ defined by 
\begin{equation}\label{metric}
\dht(\zeta,\zeta')=|\lambda(\alpha+\beta)-\lambda'(\alpha'+\beta')|_{\ell^1}+|(\alpha-\beta)-(\alpha'-\beta')|_{\ell^1}+|\lambda-\lambda'|.
\end{equation}
The metric $\dht$ will be relevant later. Now we focus on the characterization of $\FH(\Sw(\Hn))$, which requires several definitions and results first.

\begin{definition}
Let $\eta:\Ht\to\C$ be a function differentiable with respect to $\lambda$, and $f$ be a function in $\Sw(\Hn)$. For $j\in\N$, denote by $\delta_{j}\in\N^n$ the point with a ``1" in the $j^{th}$ coordinate and zeros elsewhere, and set $\zeta^{\pm}_j=(\alpha\pm\delta_j,\beta\pm\delta_j,\lambda)$. Define the following six operators,
 \begin{equation}
\widehat{\Delta}\eta(\zeta):=\frac{1}{|\lambda|}(|\alpha+\beta|+n)\eta(\zeta)+\frac{1}{|\lambda|}\sum_{j=1}^{n}(\sqrt{\alpha_j\beta_j}\eta(\zeta^{-}_j)+\sqrt{(\alpha_j+1)(\beta_j+1)}\eta(\zeta^{+}_j)),
\end{equation}

 \begin{equation}
\widehat{D}_{\lambda}\eta(\zeta):=\frac{d\eta}{d\lambda}(\zeta)+\frac{n}{\lambda}\eta(\zeta)+\frac{1}{|\lambda|}\sum_{j=1}^{n}(\sqrt{\alpha_j\beta_j}\eta(\zeta^{-}_j)-\sqrt{(\alpha_j+1)(\beta_j+1)}\eta(\zeta^{+}_j) ),
\end{equation}

\begin{equation}
\mathcal{P}f(p):=\int_{-\infty}^t(f(z,s)-f(z,-s))ds
\end{equation}

\begin{equation}
\widehat{\Sigma}_{0}(\eta):=\frac{\eta(\alpha,\beta,\lambda)-(-1)^{|\alpha+\beta|}\eta(\alpha,\beta,-\lambda)}{\lambda}
\end{equation}

\begin{equation}
(M^2f)(z,t):=|z|^2f(z,t)
\end{equation}

\begin{equation}
(M_0f)(z,t):=itf(z,t).
\end{equation}

\end{definition}
Then, the following holds
\begin{theorem} For $f\in\Sw(\Hn)$,
\begin{align}
&\FH(M^2f)=-\widehat{\Delta}(\FH f)\\
&\FH(M_0f)=\widehat{D}_{\lambda}(\FH f)\\
&\FH(\mathcal{P}f)=-i\widehat{\Sigma}_0(\FH f)
\end{align}
\end{theorem}
In a sense, this is analogous to the fact that the Fourier transform turns decay into smoothness. With this at hand one may define the Schwartz space on $\Ht$ as follows,
\begin{definition}
$\eta\in\Sw(\Ht)$ if 
\begin{enumerate}
\item For any given $(\alpha,\beta)\in\N^n$, $\eta(\alpha,\beta,\cdot):\R^*\to\C$ is smooth.
\item For any $N\in\N$ the functions $\widehat{\Delta}^N\eta,\ \widehat{D}_{\lambda}^N\eta,\text{ and } \widehat{\Sigma}_0\widehat{D}_{\lambda}^N\eta$, all decay faster than any power of $d_0=|\lambda|(|\alpha+\beta|+n)+|\alpha-\beta|$.
\end{enumerate}
\bigskip 
The space $\Sw(\Ht)$ is equipped with the family of semi-norms 
\begin{equation}\label{SwSemiNorm}
||\eta||_{N,N',\Sw(\Ht)}:=\sup_{\zeta\in\Ht}(1+d_0(\zeta))^N[\widehat{\Delta}^{N'}\eta + \widehat{D}_{\lambda}^{N'}\eta + \widehat{\Sigma}_0\widehat{D}_{\lambda}^{N'}\eta].\end{equation}
\end{definition}
\begin{theorem}\label{inversionC}
The map $\FH:\Sw(\Hn)\to\Sw(\Ht)$ is a a continuous isomorphism with continuous inverse given by \[\FH^{-1}\eta(p)=\int_{\Ht}\overline{\Phi(p,\zeta)}\eta(\zeta)d\zeta.\]
\end{theorem}
Perhaps the biggest inconvenience of the metric space $(\Ht,\dht)$ is that it is not complete, however, for any $f\in L^1(\Hn)$, $\fhatH$ is uniformly continuous on $(\Ht,\dht)$. It is therefore natural to extend $\fhatH$ to the metric completion of $(\Ht,\dht)$ which is denoted by $(\Hhat,\dhat)$. 
The metric space $(\Hhat,\dhat)$ is explicitly given by $\Hhat=\Ht\cup\Hhat_0$, where $\Hhat_0=\Rn_{\mp}\times\Z^n$, with $\Rn_{\mp}=(\R_-)^n\cup(\R_+)$. For $\zeta\in\Hhat$, if $\zeta\in\Hhat_0$ we denote it by $\zeta=(\dot{x},k)$. The metric $d_{\Hhat}$ is given by

\begin{align*}
&d_{\Hhat}(\zeta,\zeta')=d_{\Ht}(\zeta,\zeta'), && \text{ if }\  \zeta,\zeta'\in\Ht\\
&d_{\Hhat}((\alpha,\beta,\lambda),(\dot{x},k))=|\lambda(\alpha+\beta)-\dot{x}|_{\ell^1}+|\alpha-\beta-k|_{\ell^1}+|\lambda|, && \text{ if }\ (\alpha,\beta,\lambda)\in\Ht,\ (\dot{x},k)\in\Hhat_0\\
&d_{\Hhat}((\dot{x},k),(\dot{x}',k'))=|\dot{x}-\dot{x}'|_{\ell^1}+|k-k'|_{\ell^1}, && \text{ if }\ (\dot{x},k),(\dot{x}',k')\in\Hhat_0.
\end{align*} 
Abusing notation, the extension of $\fhatH$ is also denoted by $\fhatH$. The space $\Sw(\Hhat)$ is defined to be the space of all continuous functions, $\eta$, in $\Hhat$ such that $\eta\lfloor_{\Ht}\in\Sw(\Ht)$. It is not hard to see that 
\[\int_{\Ht}\mathbbm{1}_{\{|\lambda||\alpha+\beta|+|\alpha-\beta|\leq R\}}(\zeta)\mathbbm{1}_{\{|\lambda|\leq \epsilon\}}(\zeta)d\zeta\lesssim R^{2n}\epsilon.\]
Therefore, it is natural to extend $d\zeta$ to $\Hhat$ by defining, for all $\psi\in\mathcal{C}_c(\Hhat)$,
\[\int_{\Hhat}\psi(\zeta)d\zeta:=\int_{\Ht}\psi\lfloor_{\Ht}(\zeta)d\zeta.\]
The convolution theorem extends as follows,
\[\FH(f*g)(\dot{x},k)=\sum_{k'\in\Z^n}\fhatH(\dot{x},k-k')\widehat{g}_{\Hn}(\dot{x},k').\]

The space $\Sw(\Hhat)$ with the semi-norms \eqref{SwSemiNorm} is a Frech\'{e}t space, so It makes sense to talk about its topological dual. This will be the class of tempered distributions on $\Ht$ and will be denoted $\Sw'(\Hhat)$. This allows the following definition.
 \begin{definition}\label{Ftemper}
 For $T\in\Sw'(\Hn)$, $\FH(T)\in\Sw'(\Hhat)$ is given by
 \[\langle\FH T|\eta\rangle_{\Sw'(\Hhat)\times\Sw(\Hhat)}=\langle T| \FH^{\tau}\eta\rangle_{\Sw'(\Hn)\times\Sw(\Hn)}.\]
 Here $\FH^{\tau}:\Sw(\Hhat)\to\Sw(\Hn)$ is the map
 \[\FH^{\tau}\eta(p):=\int_{\Hhat}\Phi(p,\zeta)\eta(\zeta)d\zeta.\]
 \end{definition}
 Turns out $\FH:\Sw'(\Hn)\to\Sw'(\Hhat)$ is a continuous injection.
 %%%%%%%%%%%%%%%%%%%%%%%%%%%%%%%%%%%%%%%%%%%%%
  %%%%%%%%%%%%%%%%%%%%%%%%%%%%%%%%%%%%%%%%%%%%%
 %%%%%%%%%%%%%%%%%%%%%%%%%%%%%%%%%%%%%%%%%%%%%
 %%%%%%%%%%%%%%%%%%%%%%%%%%%%%%%%%%%%%%%%%%%%%
 %%%%%%%%%%%%%%%%%%%%%%%%%%%%%%%%%%%%%%%%%%%%%
 %%%%%%%%%%%%%%%%%%%%%%%%%%%%%%%%%%%%%%%%%%%%%
 %%%%%%%%%%%%%%%%%%%%%%%%%%%%%%%%%%%%%%%%%%%%%
 %%%%%%%%%%%%%%%%%%%%%%%%%%%%%%%%%%%%%%%%%%%%%
 %%%%%%%%%%%%%%%%%%%%%%%%%%%%%%%%%%%%%%%%%%%%%
 %%%%%%%%%%%%%%%%%%%%%%%%%%%%%%%%%%%%%%%%%%%%%
 %%%%%%%%%%%%%%%%%%%%%%%%%%%%%%%%%%%%%%%%%%%%%
 %%%%%%%%%%%%%%%%%%%%%%%%%%%%%%%%%%%%%%%%%%%%%
 %%%%%%%%%%%%%%%%%%%%%%%%%%%%%%%%%%%%%%%%%%%%%
 %%%%%%%%%%%%%%%%%%%%%%%%%%%%%%%%%%%%%%%%%%%%%

 \subsection{Fourier coefficients of measures}

This distributional definition of $\FH$ clearly applies to measures in $\Ma(\Hn)$. The goal of this section is to extend the study of properties of this distributional transform keeping in mind the goal of computing the Fourier transform of $K_{\sigma}$. This is precisely a first step in obtaining a frequency formulation of energy integrals. In addition to the distributional definition of $\FH$, the definition of $\FH$ on functions can be extended to $\Ma(\Hn)$ by using either \eqref{AltF} or \eqref{FInt}. A quick use of Fubini's theorem shows that, just as with functions, these give equivalent definitions. This gives us a point-wise definition of $\muhatH$ for $\mu\in\Ma(\Hn)$,
 \begin{equation}\label{muhat}
 \muhatH(\zeta)=\int_{\Hn}\Phi(p,\zeta)d\mu(p).
 \end{equation}

 \begin{lemma}\label{muhatd}
Let $\muhatH$ be as in \eqref{muhat}, and let $\FH\mu$ denote the distributional Fourier transform  $\mu$ as in Definition \ref{Ftemper}. Then, $\FH\mu=\muhatH$ as distributions. That is to say, $\FH\mu$ is given by integration against the function $\muhatH$.
 \end{lemma}
 \begin{proof}
 The proof is a straight forward computation using Fubini's theorem. Let $\eta\in\Sw(\Hhat)$, then
 \begin{align*}
 \langle\FH\mu|\eta\rangle&=\langle\mu|\FH^{\tau}\eta\rangle\\
 &=\int_{\Hn}\int_{\Ht}\Phi(p,\zeta)\eta(\zeta)d\zeta d\mu(p)\\
 &=\int_{\Ht}\eta(\zeta)\int_{\Hn}\Phi(p,\zeta)d\mu(p)d\zeta=\int_{\Ht}\eta(\zeta)\muhatH(\zeta)d\zeta.
 \end{align*}
 \end{proof}
 The following lemma, which before took some work to prove, is now immediate.
 \begin{lemma}
 For $f\in\Sw(\Hn)$, $\mu\in\Ma(\Hn)$
 \[\int_{\Hn}f(p)d\mu(p)=\int_{\Ht}\fhatH(\zeta)\overline{\muhatH(\zeta)}d\zeta.\]
 \end{lemma}
 \begin{proof}
 Once again the proof is a simple computation involving Fubini's theorem and Theorem \ref{inversionC},
 \begin{align*}
 \int_{\Hn}f(p)d\mu(p)&=\int_{\Hn}[\FH^{-1}\fhatH](p)d\mu(p)\\
 &=\int_{\Hn}\int_{\Ht}\overline{\Phi(p,\zeta)}\fhatH(\zeta)d\zeta d\mu(p)\\
 &=\int_{\Ht}\fhatH(\zeta)\int_{\Hn}\overline{\Phi(p,\zeta)}d\mu(p)=\int_{\Ht}\fhatH(\zeta)\overline{\muhatH(\zeta)}d\zeta.
 \end{align*}
 \end{proof}
 The hope is to extend this Lemma to the integral in \eqref{KorEn}. However, while this applies to $f\in\Sw(\Hn)$, $K_{\sigma}*\mu$ is not in the Schwarz class and, a priori, may not even be a tempered distribution. Moreover, as of yet, there is no suitable extension of \eqref{coeffconv} to distributions. If $f,g,h\in\Sw(\Hn)$ a quick computation yields 
 \[\int_{\Hn}f*g(p)h(p)dp=\int_{\Hn}g(q)\tilde{f}*h(q)dq,\]
 where $\tilde{f}(p)=f(-p)$. This motivates the definition of the convolution of distributions with Schwartz functions. We recall the definition here,
 \begin{definition}
 For $g\in\Sw(\Hn)$, $T\in\Sw'(\Hn)$, $g*T$ is the distribution given, for all $f\in\Sw(\Hn)$, by
 \[\langle g*T|f \rangle=\langle T|\tilde{g}*f\rangle.\]
 \end{definition}
 
 In a similar way, for $\eta,\theta,\psi\in\Sw(\Hhat)$ we compute
 \begin{align*}
 \int_{\Hhat}\eta\cdot\theta(\zeta)\psi(\zeta)d\zeta&=\int_{\Ht}\eta\cdot\theta(\zeta)\psi(\zeta)d\zeta\\
 &=\sum_{\alpha,\beta}\int_{\R^*}\sum_{\gamma}\eta(\gamma, \beta, \lambda)\theta(\alpha, \gamma,\lambda)\psi(\alpha,\beta,\lambda)d\rho(\lambda)\\
 &=\sum_{\gamma,\alpha}\int_{\R^*}\theta(\alpha, \gamma,\lambda)\sum_{\beta}\eta(\gamma,\beta,\lambda)\psi(\alpha,\beta\lambda)d\rho(\lambda)\\
 &=\int_{\Ht}\theta(\zeta)(\eta_{\tau}\cdot\psi)(\zeta)d\zeta=\int_{\Hhat}\theta(\zeta)(\eta_{\tau}\cdot\psi)(\zeta)d\zeta,
 \end{align*}
 
 where $\eta_{\tau}(\alpha,\beta,\lambda)=\eta(\beta,\alpha,\lambda)$, and $\eta_{\tau}(\dot{x},k)=\eta(\dot{x},-k)$. This motivates the following definition.
 \begin{definition}
 For $\eta\in\Sw(\Hhat)$ and $\Psi\in\Sw'(\Hhat)$, $\eta\cdot\Psi$ is defined as the distribution given, for all $\theta\in\Sw(\Hhat)$, by,
 \[\langle\eta\cdot\Psi|\theta \rangle=\langle\Psi|\eta_{\tau}\cdot\theta \rangle\]
 \end{definition}

 Along side the transform $\FH^{\tau}$ the following transform is also introduced: $\FH^{-\tau}:\Sw(\Hn)\to\Sw(\Hhat)$, given by \[\FH^{-\tau}f(\zeta)=\int_{\Hn}\overline{\Phi(p,\zeta)}f(p)dp.\]
 With this, we have a total of 4 ``Fourier-like" transforms $\FH,\FH^{-\tau}:\Sw(\Hn)\to\Sw(\Hhat)$ and $\FH^{-1},\FH^{\tau}:\Sw(\Hhat)\to\Sw(\Hn)$. The following relations between these are easily verified.
 \begin{lemma}\label{FTRelations}
 Let $f\in\Sw(\Hn)$, $\eta\in\Sw(\Hhat)$ and for any function $\vartheta:\Hhat\to\C$ denote $\vartheta_-(\alpha,\beta,\lambda)=\vartheta(\alpha,\beta,-\lambda)$.
 \begin{enumerate}
 \item  $(\FH)^{-1}=\FH^{-1}$,
 \item $(\FH^{\tau})^{-1}=\FH^{-\tau}$
 \item $\FH^{-\tau}f=[\FH f]_-$
 \item $\FH^{-1}\eta=\FH^{\tau}(\eta_-)$
 \item $\FH^{-\tau}\tilde{f}(\zeta)=[\FH f]_{\tau}(\zeta)$
  \end{enumerate}
 \end{lemma}

 \begin{lemma}\label{ConvFDist}
 For $f\in\Sw(\Hn)$, and $T\in\Sw'(\Hn)$, $\FH(f*T)=\fhatH\cdot\widehat{T}_{\Hn}$ in the sense of distributions.
 \end{lemma}
 \begin{proof} 
 Let $f\in\Sw(\Hn)$, $T\in\Sw'(\Hn)$ and $\eta\in\Sw(\Hhat)$. Then, using (5) on Lemma \ref{FTRelations},
 \begin{align*}
  \langle\FH(f*T)|\eta \rangle &=\langle f*T|\FH^{\tau}\eta \rangle\\
  &=\langle T|\tilde{f}*\FH^{\tau}\eta\rangle\\
  &=\langle T|\FH^{\tau}(\FH^{-\tau}\tilde{f}\cdot\eta) \rangle\\
  &=\langle\FH T|\FH^{-\tau}\tilde{f}\cdot\eta \rangle\\
&=\langle\FH T|[\FH{f}]_{\tau}\cdot\eta \rangle\\
&=\langle \FH{f}\cdot\FH T|\eta \rangle.
\end{align*}
This completes the proof.
 \end{proof}
Let's finishing this section by computing  $\FH\delta_0$ in the sense of distributions, where $\delta_0$ is the Dirac distribution at zero. This simple computation was done by the authors on \cite{BCD2} but since it will be of great relevance when computing the Fourier transform of $K_{\sigma}$, we do it here as well. For $\eta\in\Sw(\Hn)$,
\begin{align*}
\langle \FH\delta_0|\eta\rangle&=\langle \delta_0|\FH^{\tau}\eta\rangle\\
&=\FH^{\tau}\eta(0)=\int_{\Ht}\Phi(0,\zeta)\eta(\zeta)d\zeta\\
&=\int_{\Ht}\int_{\Rn}\pla(\xi)\plb(\xi)d\xi\eta(\zeta)d\zeta.
\end{align*} 
Since $\{\pla\}_{\alpha\in\N^n}$ is an orthonormal basis for $L^{2}(\Rn)$ it follows that the inner integral is $\mathbbm{1}_{\{\alpha=\beta\}}$. Therefore,
\[\langle \FH\delta_0|\eta\rangle=\int_{\Ht}\mathbbm{1}_{\{\alpha=\beta\}}(\zeta)\eta(\zeta)d\zeta.\]
It follows that $\FH\delta_0=\mathbbm{1}_{\{\alpha=\beta\}}$ in the sense of distributions.
 %%%%%%%%%%%%%%%%%%%%%%%%%%%%%%%%%%%%%%%%%%%%%
  %%%%%%%%%%%%%%%%%%%%%%%%%%%%%%%%%%%%%%%%%%%%%
 %%%%%%%%%%%%%%%%%%%%%%%%%%%%%%%%%%%%%%%%%%%%%
 %%%%%%%%%%%%%%%%%%%%%%%%%%%%%%%%%%%%%%%%%%%%%
 %%%%%%%%%%%%%%%%%%%%%%%%%%%%%%%%%%%%%%%%%%%%%
 %%%%%%%%%%%%%%%%%%%%%%%%%%%%%%%%%%%%%%%%%%%%%
 %%%%%%%%%%%%%%%%%%%%%%%%%%%%%%%%%%%%%%%%%%%%%
 %%%%%%%%%%%%%%%%%%%%%%%%%%%%%%%%%%%%%%%%%%%%%
 %%%%%%%%%%%%%%%%%%%%%%%%%%%%%%%%%%%%%%%%%%%%%
 %%%%%%%%%%%%%%%%%%%%%%%%%%%%%%%%%%%%%%%%%%%%%
 %%%%%%%%%%%%%%%%%%%%%%%%%%%%%%%%%%%%%%%%%%%%%
 %%%%%%%%%%%%%%%%%%%%%%%%%%%%%%%%%%%%%%%%%%%%%
 %%%%%%%%%%%%%%%%%%%%%%%%%%%%%%%%%%%%%%%%%%%%%
 %%%%%%%%%%%%%%%%%%%%%%%%%%%%%%%%%%%%%%%%%%%%%
 \subsection{The Fourier transform of the Koranyi-Riesz kernel}
 
 Another improtant property of the classical Fourier transform is its interaction with differential operators. In $\Hn$ one should expect any reasonable definition of the Fourier transform to interact nicely with the subLaplacian operator 
 \[\Delta_{\Hn}=\sum_{j=1}^nX_j^2+Y_j^2.\] 
Turns out that $\Delta_{\Hn}$ is a $\FH$ multiplier with $\FH\Delta_{\Hn}f=|\lambda|(2|\alpha|+n)\FH f$, $f\in\Sw(\Hn)$. In \cite{Folland3}, G.B. Folland showed that there is a constant $d=d(n)$ depending only on $n$ such that the fundamental solution of $\Delta_{\Hn}$ is $d(n)^{-1}K_{Q-2}$. This allows to, quite easily, compute $\FH K_{Q-2}$ in the sense of distributions. This computation will be done later in greater generality (i.e. for $K_{\sigma},\ 0<\sigma<Q$) but it turns out that 

 \[\FH K_{Q-2}(\alpha,\beta,\lambda)=d(n)\frac{1}{|\lambda|(2|\alpha|+n)}\mathbbm{1}_{\alpha=\beta}(\alpha,\beta,\lambda).\]
  
This gives an explicit formula for $\FH K_{\sigma} $ for a particular choice of $\sigma$. To obtain the more general case of arbitrary $0\leq \sigma\leq Q$, one might be tempted to consider a fundamental solution of the fractional subLaplacian $\Delta_{\Hn}^{\sigma/2}$, and use a similar computation. One can show that if $R_{\sigma}$ is a fundamental solution of $\Delta_{\Hn}^{\sigma/2}$, then $\FH R_{\sigma}=\tilde{d}(n,\sigma)\frac{\mathbbm{1}_{\alpha=\beta}}{[|\lambda|(2|\alpha|+n)]^{\sigma/2}}$ in the sense of distributions. However it is not true that $R_{\sigma}$ is a constant multiple of $K_{\sigma}$ (unless, of course, $\sigma=2$). In fact, there is no known explicit formula for $R_{\sigma}$. There has been work done in the direction of realizing $R_{\sigma}$ as explicitly as possible, see for instance \cite{BDR}, and more recently \cite{WangWu} where a specific case is treated. Instead, it proves useful to consider conformally invariant fractional powers of the subLaplacian. This operator arises naturally in CR geometry as studied for instance in  \cite{BFM}, \cite{BOO}, \cite{FrankLieb} and \cite{JohaWallch}. It also arises in studying the extension problem in $\Hn$, \cite{FrankGonzMontiTan}, and has been studied in the context of Hardy inequalities in the Heisenberg group, \cite{RoncalThang}. In what follows, this operator will be introduced and studied in the context of $\FH$. \\

The function $\Phi$, introduced before, is of the form $e^{i\lambda t}|\lambda|^{-\frac{n}{2}}(2\pi)^{\frac{n}{2}}\Theta$, where
\[\Theta(z,\zeta)=\left(\frac{|\lambda|}{2\pi}\right)^{\frac{n}{2}}\int_{\Rn}e^{i\lambda x\cdot\xi}\pla(\xi+\frac{y}{2})\plb(\xi-\frac{y}{2})d\xi=\left(\frac{|\lambda|}{2\pi}\right)^{\frac{n}{2}}\langle\rhol(z,0)\pla|\plb \rangle,\]
are the rescaled special Hermite functions. As mentioned before, this is also the Fourieri-Wigner transform of $\pla$ and $\plb$. The set $\{\Theta(\cdot,\zeta)\}_{\alpha,\beta\in\N^n}$ forms a complete orthonormal basis for $L^2(\C^n)$. In particular, for $f\in\Sw(\Hn)$
\begin{equation}\label{SpecHermExp}
f^{\lambda}(z)=\sum_{\alpha}\sum_{\beta}\langle f^{\lambda}|\Theta(\cdot, \zeta) \rangle\Theta(z,\zeta).
\end{equation}

This expansion has a more compact form in terms of Laguerre functions. For $k\in\N$, the $\text{k}^{th}$ Laguerre polynomial of type $\delta>-1$ is 
\[L_k^{\delta}(t)e^{-t}t^{\delta}=\frac{1}{k!}(\frac{d}{dt})^k(e^{-t}t^{k+\delta}).\]
The Laguerre functions are given in terms of $L_k^{\delta}$ by 
\[\ell_{k}^{\delta}(z):=L_{k}^{\delta}(\frac{1}{2}|z|^2)e^{-\frac{1}{2}|z|^2}.\]
Just as with the Hermite functions, $\ell_k^{\delta}$ can be adapted to the representation theory of $\Hn$ by a simple rescaling. The rescaled Laguerre function is given by $\ell_{k,\lambda}^{\delta}(z)=\ell_k^{\delta}(\sqrt{|\lambda|}z)$. This functions are closely related to the functions $\Theta(z,\zeta)$.
\begin{theorem}\label{HermLag}
\[\sum_{|\alpha|=k}\Theta(z,\alpha,\alpha, \lambda)=(2\pi)^{-\frac{n}{2}}(|\lambda|)^{\frac{n}{2}}\ell_{k,\lambda}^{n-1}(z).\]
\end{theorem}
For a proof of this the reader is referred to \cite{Tangavelu2} (see in particular 2.3.26). Now consider the convolution
\[f*\Phi(\cdot,\zeta_{\alpha})(p)=\int_{\Hn}f(q)\Phi(q^{-1}p,\zeta_{\alpha})dq,\]
where $\zeta_{\alpha}=(\alpha,\alpha,\lambda)\in\Ht$. Since 
\[\Phi(q^{-1}p,\zeta)=e^{i\lambda(t-s+\frac{1}{2}Im(z\overline{w}))}(2\pi)^{\frac{n}{2}}|\lambda|^{-\frac{n}{2}}\Theta(z-w,\zeta),\]
it follows that ,
\[f*\Phi(\cdot,\zeta_{\alpha})(p)=(2\pi)^{\frac{n}{2}}|\lambda|^{-\frac{n}{2}}e^{i\lambda t}\int_{\C^n}f^{-\lambda}(w)\Theta(z-w,\zeta_{\alpha})e^{\frac{i\lambda}{2}Im(z\overline{w})}dw=(2\pi)^{\frac{n}{2}}|\lambda|^{-\frac{n}{2}}e^{i\lambda t}(f^{-\lambda}*_{\lambda}\Theta(\cdot,\zeta_{\alpha}))(z).\]
Here, $*_{\lambda}$ denotes $\lambda-$twisted convolutions, given for $g,h\in L^1(\C^n)$ by 
\[g*_{\lambda}h(z)=\int_{\C^n}g(w)h(z-w)e^{\frac{i\lambda}{2}Im(z\overline{w})}.\] 
Now, Since $\rhol(p)\pla$ is in $L^2(\Rn)$ it follows that $\rhol(p)\pla=\sum_{\beta}\langle \rhol(p)\pla|\plb \rangle\plb$. Therefore
\begin{align*}
\Phi(q^{-1}p,\zeta_{\alpha})&=\langle\rhol(q^{-1})\sum_{\beta}\langle \rhol(p)\pla|\plb \rangle\plb|\pla \rangle\\
&=\sum_{\beta}\langle \rhol(p)\pla|\plb \rangle\langle\rhol(q^{-1})\plb|\pla\rangle\\
&=\sum_{\beta}\Phi(p,\zeta)\overline{\Phi(q,\zeta)}.
\end{align*}
Which gives
\begin{align*}
(f*\Phi(\cdot,\zeta_{\alpha}))(p)=\sum_{\beta}\int_{\Hn}f(q)\overline{\Phi(q,\zeta)}dq\Phi(p,\zeta)=(2\pi)^{n}|\lambda|^{-n}e^{i\lambda t}\sum_{\beta}\langle f^{-\lambda}|\Theta(\cdot,\zeta)\rangle\Theta(z,\zeta).
\end{align*}
With this, and Theorem \ref{HermLag}, \eqref{SpecHermExp} rewrites as
\begin{equation}
\begin{split}
f^{\lambda}(p)&=\frac{|\lambda|^{n}}{(2\pi)^n}\sum_{k=0}^{\infty}\sum_{|\alpha|=k}e^{-i\lambda t}(f*\Phi(\cdot,\zeta))(p)=\frac{|\lambda|^{n/2}}{(2\pi)^{n/2}}\sum_{k=0}^{\infty}\sum_{|\alpha|=k}(f^{-\lambda}*_{\lambda}\Theta(\cdot,\zeta_{\alpha}))(z)\\&=\frac{|\lambda|^{n}}{(2\pi)^n}\sum_{k=0}^{\infty}(f^{-\lambda}*_{\lambda}\ell_{k,-\lambda}^{n-1})(z)=\frac{|\lambda|^{n}}{(2\pi)^n}\sum_{k=0}^{\infty}(f^{\lambda}*_{-\lambda}\ell_{k,\lambda}^{n-1})(z).
\end{split}
\end{equation}
This relation between the special Hermite expansion and the Laguerre expansion will be instrumental in computing $\mathfrak{K}_{\sigma}$. First, conformally invariant fractional powers of the subLaplacian are introduced.
\begin{definition}
For $0\leq \sigma\leq Q$ the conformally invariant $\sigma$-fractional subLaplacian is the operator $\mathcal{L}_{\sigma}:\Sw(\Hn)\to\Sw(\Hn)$ given by 
\begin{equation}
\mathcal{L}_{\sigma}f(p)=\int_{-\infty}^{\infty}\sum_{k=0}^{\infty}(2|\lambda|)^{\sigma/2}\frac{\Gamma(k+\frac{Q+\sigma}{4})}{\Gamma(k+\frac{Q-\sigma}{4})}f^{\lambda}*_{\lambda}\ell_{k,\lambda}^{n-1}(z)e^{-i\lambda t}d\rho(\lambda).
\end{equation}
\end{definition}

This `modified' subLaplacian if of great importance here because its fundamental solution is precisely $d(n,\sigma)^{-1}K_{Q-\sigma}$ where $d(n,\sigma)$ is a constant depending only on $n$ and $\sigma$. A proof of this can be found in \cite[Section 3]{RoncalThang} (see, specifically, equation (3.10)). One can see that taking $\sigma=2$
\begin{align*}
(2|\lambda|)^{\sigma/2}\frac{\Gamma(k+\frac{Q+\sigma}{4})}{\Gamma(k+\frac{Q-\sigma}{4})}=2|\lambda|\frac{\Gamma(k+\frac{n}{2}+1)}{\Gamma(k+\frac{n}{2})}=2|\lambda|(k+\frac{n}{2})=|\lambda|(2k+n).
\end{align*}
This highlights the relation between $\mathcal{L}_{\sigma}$ and $\Delta_{\Hn}^{\sigma/2}$. Indeed, $\Delta_{\Hn}^{\sigma/2}=U(\sigma)\mathcal{L}_{\sigma}$ where $U(\sigma)$ is a bounded operator depending only on $\sigma$. In particular, $U(2)$ is the identity operator.

\begin{proposition}\label{multiplyer}
For $f\in\Sw(\Hn)$, $\mathcal{L}_{\sigma}$ satisfies 
\[ \mathcal{L}_{\sigma}f=\FH^{-1}[(2|\lambda|)^{\sigma/2}\frac{\Gamma(|\alpha|+\frac{Q+\sigma}{4})}{\Gamma(|\alpha|+\frac{Q-\sigma}{4})}\FH f].\]
\end{proposition}

\begin{proof}
From the previous discussion, it follows that $\mathcal{L}_{\sigma}f$ can be written as
\begin{equation}\label{A}
\int_{-\infty}^{\infty}\left[\sum_{\alpha,\beta}(2|\lambda|)^{\sigma/2}\frac{\Gamma(|\alpha|+\frac{Q+\sigma}{4})}{\Gamma(|\alpha|+\frac{Q-\sigma}{4})}(2\pi)^{n}|\lambda|^{-n}\langle f^{-\lambda}|\Theta(\cdot,\zeta)\rangle \Theta(z,\zeta)\right]e^{i\lambda t}d\rho(\lambda).
\end{equation}
One can also see that 
\[(2\pi)^{\frac{n}{2}}|\lambda|^{-\frac{n}{2}}\langle f^{-\lambda}|\Theta(\cdot,\zeta)\rangle=\int_{\Hn}f(q)\overline\Phi(q,\zeta)dq=\FH^{-\tau}f(\zeta).\]
By (3) on Lemma \ref{FTRelations} and by the change of variable $\lambda\to-\lambda$, \eqref{A} becomes 
\begin{align*}
\int_{\Ht}(2|\lambda|)^{\sigma/2}\frac{\Gamma(|\alpha|+\frac{Q+\sigma}{4})}{\Gamma(|\alpha|+\frac{Q-\sigma}{4})}&\FH f(\zeta) (2\pi)^{\frac{n}{2}}|\lambda|^{-\frac{n}{2}}\overline{\Theta(z,\zeta)}e^{-i\lambda t}d\zeta\\
&=\int_{\Ht}(2|\lambda|)^{\sigma/2}\frac{\Gamma(|\alpha|+\frac{Q+\sigma}{4})}{\Gamma(|\alpha|+\frac{Q-\sigma}{4})}\FH f(\zeta) \overline{\Phi(z,\zeta)}d\zeta\\
&=\FH^{-1}\left[(2|\lambda|)^{\sigma/2}\frac{\Gamma(|\alpha|+\frac{Q+\sigma}{4})}{\Gamma(|\alpha|+\frac{Q-\sigma}{4})}\FH f\right],
\end{align*}
as claimed.
\end{proof}

\begin{corollary}\label{multiplierSw}
For $f\in\Sw(\Hn)$, and $\zeta\in\Ht$, \[\FH\mathcal{L}_{\sigma}f(\zeta)=(2|\lambda|)^{\sigma/2}\frac{\Gamma(|\alpha|+\frac{Q+\sigma}{4})}{\Gamma(|\alpha|+\frac{Q-\sigma}{4})}\FH f(\zeta)\]
\end{corollary}
\begin{proof}
Follows trivially from Proposition \ref{multiplyer}.
\end{proof}

On the frequency side one has the following relation,
\begin{corollary}
For $\eta\in\Sw(\Hhat)$,
\[\mathcal{L}_{\sigma}\FH^{\tau}\eta=\FH^{\tau}\left[(2|\lambda|)^{\sigma/2}\frac{\Gamma(|\alpha|+\frac{Q+\sigma}{4})}{\Gamma(|\alpha|+\frac{Q-\sigma}{4})}\eta\right]\]
\end{corollary}
\begin{proof}
Using (3) on Lemma \ref{FTRelations} and Proposition \ref{multiplyer} one has,
\begin{align*}
\mathcal{L}_{\sigma}\FH^{\tau}\eta&=\FH^{-1}\left[(2|\lambda|)^{\sigma/2}\frac{\Gamma(|\alpha|+\frac{Q+\sigma}{4})}{\Gamma(|\alpha|+\frac{Q-\sigma}{4})}\FH\FH^{\tau}\eta\right]\\
&=\FH^{-1}\left[(2|\lambda|)^{\sigma/2}\frac{\Gamma(|\alpha|+\frac{Q+\sigma}{4})}{\Gamma(|\alpha|+\frac{Q-\sigma}{4})}\eta_{-}\right]\\
&=\FH^{\tau}\left[(2|\lambda|)^{\sigma/2}\frac{\Gamma(|\alpha|+\frac{Q+\sigma}{4})}{\Gamma(|\alpha|+\frac{Q-\sigma}{4})}\eta\right]
\end{align*}
\end{proof}

In order to compute $\FH K_{\sigma}$ explicitly, Corollary \ref{multiplierSw} needs to be extend to distributions. A quick computation using Lemma \ref{FTRelations} shows that for $f,g\in\Sw(\Hn)$
\begin{equation}
\int_{\Hn}\mathcal{L}_{\sigma}f(p)g(p)dp=\int_{\Hn}f(p)\mathcal{L}_{\sigma}g(p)dp.
\end{equation}

Therefore, $\mathcal{L}_{\sigma}$ is extended to distributions in the usual way. For $T\in\Sw'(\Hn)$, $\mathcal{L}_{\sigma}T\in\Sw'(\Hn)$ is given, for $g\in\Sw(\Hn)$, by 
\[\langle\mathcal{L}_{\sigma}T|g\rangle=\langle T|\mathcal{L}_{\sigma}g \rangle.\]
From here one can check that if $T\in\Sw'(\Hn)$,
\begin{equation}\label{multiplierDist}
\FH\mathcal{L}_{\sigma}T=(2|\lambda|)^{\sigma/2}\frac{\Gamma(|\alpha|+\frac{Q+\sigma}{4})}{\Gamma(|\alpha|+\frac{Q-\sigma}{4})}\FH T,
\end{equation}
In the $\Sw'(\Hhat)$ sense. Indeed,
\begin{align*}
\langle\FH\mathcal{L}_{\sigma}T|\eta\rangle&= \langle \mathcal{L}_{\sigma}T|\FH^{\tau}\eta\rangle\\
&=\langle T|\mathcal{L}_{\sigma}\FH^{\tau}\eta\rangle\\
&=\langle T|\FH^{\tau}[(2|\lambda|)^{\sigma/2}\frac{\Gamma(|\alpha|+\frac{Q+\sigma}{4})}{\Gamma(|\alpha|+\frac{Q-\sigma}{4})}\eta]\rangle\\
&=\langle(2|\lambda|)^{\sigma/2}\frac{\Gamma(|\alpha|+\frac{Q+\sigma}{4})}{\Gamma(|\alpha|+\frac{Q-\sigma}{4})}\FH T| \eta\rangle.
\end{align*}
From here one obtains the desired result.

\begin{proposition}\label{FHKor}
\[\FH K_{Q-\sigma}=d(n, Q-\sigma)(2|\lambda|)^{-\sigma/2}\frac{\Gamma(|\alpha|+\frac{Q-\sigma}{4})}{\Gamma(|\alpha|+\frac{Q+\sigma}{4})}\mathbbm{1}_{\{\alpha=\beta\}},\]
in the sense of distributions.
\end{proposition}
\begin{proof}
Recall that $d(\sigma,Q)^{-1}K_{Q-\sigma}$ is the fundamental solution of $\mathcal{L}_{\sigma}$. Therefore, since
\[\FH\mathcal{L}_{\sigma}K_{Q-\sigma}=(2|\lambda|)^{\sigma/2}\frac{\Gamma(|\alpha|+\frac{Q+\sigma}{4})}{\Gamma(|\alpha|+\frac{Q-\sigma}{4})}\FH K_{Q-\sigma},\]
it follows that 
\[d(n,Q-\sigma)\FH\delta_{0}=(2|\lambda|)^{\sigma/2}\frac{\Gamma(|\alpha|+\frac{Q+\sigma}{4})}{\Gamma(|\alpha|+\frac{Q-\sigma}{4})}\FH K_{Q-\sigma}.\]
Recalling the transform of $\delta_0$ we obtain
\[(2|\lambda|)^{\sigma/2}\frac{\Gamma(|\alpha|+\frac{Q+\sigma}{4})}{\Gamma(|\alpha|+\frac{Q-\sigma}{4})}\FH K_{Q-\sigma}=d(n,Q-\sigma)\mathbbm{1}_{\{\alpha=\beta\}},\]
in the sense of $\Sw'(\Hhat)$, as claimed.
\end{proof}

Hereafter, to simplify notation, I denote 
\[\mathfrak{K}_{\sigma}(\alpha,\lambda)=(2|\lambda|)^{-\sigma/2}\frac{\Gamma(|\alpha|+\frac{Q-\sigma}{4})}{\Gamma(|\alpha|+\frac{Q+\sigma}{4})}.\]
 The group Fourier transform of the Koranyi-Riez kernel, has been known for quite some time. It was originally computed by Cowling and Haagerup in \cite{CowlingHaagerup}. Their computation yields
\begin{equation}\label{groupFK}
\widehat{K}_{Q-\sigma}(\lambda)=\tilde{d}_n|\lambda|^{-\sigma/2}\frac{\Gamma(\frac{\sigma}{2})}{\Gamma(\frac{Q-\sigma)}{4})}\sum_{k=1}^{\infty}\frac{\Gamma(k+\frac{Q-\sigma}{4})}{\Gamma(k+\frac{Q+\sigma}{4})}P_k(\lambda),
\end{equation} 
where $\tilde{d}_n$ is a constant, and $P_k(\lambda)$ is the orthogonal projection of $L^2(\Rn)$ onto the eigenspace $\{\pla:|\alpha|=k\}$. 
A quick, computation renders
\[\langle P_k(\lambda)\pla,\plb\rangle_{L^2(\Rn)}=\mathbbm{1}_{\{\alpha=\beta\text{ and }|\alpha|=k\}},\]
which, at least formally, further supports Proposition \ref{FHKor}.
 %%%%%%%%%%%%%%%%%%%%%%%%%%%%%%%%%%%%%%%%%%%%%
  %%%%%%%%%%%%%%%%%%%%%%%%%%%%%%%%%%%%%%%%%%%%%
 %%%%%%%%%%%%%%%%%%%%%%%%%%%%%%%%%%%%%%%%%%%%%
 %%%%%%%%%%%%%%%%%%%%%%%%%%%%%%%%%%%%%%%%%%%%%
 %%%%%%%%%%%%%%%%%%%%%%%%%%%%%%%%%%%%%%%%%%%%%
 %%%%%%%%%%%%%%%%%%%%%%%%%%%%%%%%%%%%%%%%%%%%%
 %%%%%%%%%%%%%%%%%%%%%%%%%%%%%%%%%%%%%%%%%%%%%
 %%%%%%%%%%%%%%%%%%%%%%%%%%%%%%%%%%%%%%%%%%%%%
 %%%%%%%%%%%%%%%%%%%%%%%%%%%%%%%%%%%%%%%%%%%%%
 %%%%%%%%%%%%%%%%%%%%%%%%%%%%%%%%%%%%%%%%%%%%%
 %%%%%%%%%%%%%%%%%%%%%%%%%%%%%%%%%%%%%%%%%%%%%
 %%%%%%%%%%%%%%%%%%%%%%%%%%%%%%%%%%%%%%%%%%%%%
 %%%%%%%%%%%%%%%%%%%%%%%%%%%%%%%%%%%%%%%%%%%%%
 %%%%%%%%%%%%%%%%%%%%%%%%%%%%%%%%%%%%%%%%%%%%%
\subsection{Application to energy integrals}
The explicit expression for $\FH K_{\sigma}$ brings us a step closer to the coveted analogue of \eqref{FEnergy}. In fact, it is now possible to compute energies of Schwartz functions as an integral over the frequency space, $\Hhat$. For $g\in\Sw(\Hn)$, the $\sigma-$ energy of $g$ is defined as the integral
\[I_{\sigma}(g):=\int_{\Hn}\int_{\Hn}||q^{-1}p||^{-\sigma}g(q)g(p)dqdp.\]
\begin{lemma}\label{FreqEnSw}
For $g\in\Sw(\Hn)$
\[I_{\sigma}(g)=d(n,\sigma)\int_{\Ht}\mathfrak{K}_{Q-\sigma}(\alpha,\lambda)|\widehat{g}_{\Hn}(\zeta)|^2d\zeta.\]
\end{lemma}

\begin{proof}
\begin{align*}
I_{\sigma}(g)&=\int_{\Hn}g*K_{\sigma}(p)g(p)dp=\langle g*K_{\sigma}|g \rangle\\
&=\langle\FH(g*K_{\sigma})|\overline{\FH g} \rangle=\langle \FH K_{\sigma}|(\FH g)_{\tau}\cdot\overline\FH(g)\rangle\\
&=d(n,\sigma)\int_{\Ht}\mathfrak{K}_{Q-\sigma}(\alpha,\lambda)\mathbbm{1}_{\{\alpha=\beta\}}(\widehat{g}_{\Hn})_{\tau}\cdot\overline{\widehat{g}_{\Hn}}(\zeta)d\zeta\\
&=d(n,\sigma)\sum_{\alpha}\int_{\R^*}\mathfrak{K}_{Q-\sigma}(\alpha,\lambda)(\widehat{g}_{\Hn})_{\tau}\cdot\overline{\widehat{g}_{\Hn}}(\alpha,\alpha,\lambda)d\rho(\lambda)\\
&=d(n,\sigma)\sum_{\alpha}\int_{\R^*}\mathfrak{K}_{Q-\sigma}(\alpha,\lambda)\sum_{\gamma\in\N^n}\widehat{g}_{\Hn}(\alpha,\gamma,\lambda)\overline{\widehat{g}_{\Hn}(\alpha,\gamma,\lambda)}d\rho(\lambda)\\
&=d(n,\sigma)\sum_{\alpha,\gamma}\int_{\R^*}\mathfrak{K}_{Q-\sigma}(\alpha,\lambda)|\widehat{g}_{\Hn}(\alpha,\gamma,\lambda)|^2d\rho(\lambda)\\
&=d(n,\sigma)\int_{\Ht}\mathfrak{K}_{Q-\sigma}(\alpha,\lambda)|\widehat{g}_{\Hn}(\zeta)|^2d\zeta.
\end{align*}

\end{proof}
The goal is to replace $g\in\Sw(\Hn)$ by $\mu\in\Ma(\Hn)$. This can be done with the standard convolution approximation. 
\begin{proposition}\label{FreqEnH}
Let $\mu\in\Ma(\Hn)$
\[I_{\sigma}(\mu)=d(n,\sigma)\int_{\Hhat}\mathfrak{K}_{Q-\sigma}(\alpha,\lambda)|\muhatH(\zeta)|^2d\zeta\]
\end{proposition}

\begin{proof}
Let $\psi$ be a compactly supported, non-negative, smooth function with unit mass, so that defining $\psie(p)=\epsilon^{-Q}\psi(\delta_{1/\epsilon}p)$ makes $\{\psie\}_{\epsilon>0}$ a compactly supported, smooth approximation to the identity. Set $\mue=\mu*\psie$ so that $\mu\in\mathcal{C}^{\infty}_{c}(\Hn)$ as well. By Lemma \ref{FreqEnSw}, 
\[I_{\sigma}(\mue)=d(n,\sigma)\int_{\Hhat}\mathfrak{K}_{Q-\sigma}(\alpha,\lambda)|\muehat(\zeta)|^2d\zeta.\]
The idea is to show that $I_{\sigma}(\mu)=\lim_{\epsilon\to0}I_{\sigma}(\mue)$ and also \[\lim_{\epsilon\to0}\int_{\Hhat}\mathfrak{K}_{Q-\sigma}(\alpha,\lambda)|\muehat(\zeta)| ^2d\zeta=\int_{\Hhat}\mathfrak{K}_{Q-\sigma}(\alpha,\lambda)|\muhatH(\zeta)|^2d\zeta.\] I will first deal with the frequency side, and show the convergence by splitting the proof in 2 cases.

First assume that 
\[\int_{\Hhat}\mathfrak{K}_{Q-\sigma}(\alpha,\lambda)|\muhatH(\zeta)|^2d\zeta=\infty.\]
Since $\Phi$ is continuous and bounded, it follows from weak convergence that $\muehat\to\muhatH$ point-wise. Therefore, by Fatou's Lemma,
\[\infty\leq\liminf_{\epsilon\to0}\int_{\Hhat}\mathfrak{K}_{Q-\sigma}(\alpha,\lambda)|\muehat(\zeta)|^2d\zeta,\]
so the equality is trivial.\\

Now assume 
\[\int_{\Hhat}\mathfrak{K}_{Q-\sigma}(\alpha,\lambda)|\muhatH(\zeta)|^2d\zeta\leq\infty.\]
The following identity will be used: Let $H$ be a Hilbert space and $\{e_j\}_{j\in\N}$ and orthonormal basis. For an operator $A\in\mathcal{B}(H)$,
\[\sum_{j}|\langle Ae_i|e_j\rangle_{H}|^2=||Ae_i||_H^2.\]

Since $\mathfrak{K}_{Q-\sigma}$ is independent of $\beta$, it follows that $\sum_{\beta}|\muhatH(\alpha,\beta,\lambda)|^2$ is finite for each fixed $\alpha\in\N^n$ and $\varrho$-almost all $\lambda\in\R^*.$ Moreover it is integrable with respect to the product measure $\#\times\varrho$ where $\#$ stands for the counting measure on $\N^n$. Recalling \eqref{AltF}, 
\begin{align*}
\sum_{\beta}|\muehat(\alpha,\beta,\lambda)-&\muhatH(\alpha,\beta,\lambda)|^2\\
&=\sum_{\beta}|\langle(\widehat{\mu}_{\epsilon}(\lambda)-\muhat(\lambda))\pla,\plb\rangle_{L^2(\Rn)}|^2\\
&=||(\widehat{\mu}_{\epsilon}(\lambda)-\muhat(\lambda))\pla||^2_{L^2(\Rn)}.
\end{align*}
Since $\widehat{\mu}_{\epsilon}(\lambda)\to\muhat(\lambda)$ in the strong operator topology, it follows that for each fixed $\lambda\in\R^*$ and $\alpha\in\N^n$,
\[\lim_{\epsilon\to0}\sum_{\beta}|\muehat(\alpha,\beta,\lambda)-\muhatH(\alpha,\beta,\lambda)|^2=0.\]
In particular,
\[\lim_{\epsilon\to0}\sum_{\beta}|\muehat(\alpha,\beta,\lambda)|^2=\sum_{\beta}|\muhatH(\alpha,\beta,\lambda)|^2.\]
Furthermore, $\sum_{\beta}|\muehat(\alpha,\beta,\lambda)|^2$ can be dominated as follows,
\begin{align*}
\sum_{\beta}|\muehat(\alpha,\beta,\lambda)|^2&=\sum_{\beta}|\langle\widehat{\psi}_{\epsilon}(\lambda)\muhat(\lambda)\pla,\plb\rangle_{L^2(\Rn)}|^2\\
&=||\widehat{\psi}_{\epsilon}(\lambda)\muhat(\lambda)\pla||^2_{L^2(\Rn)}\\
&\leq ||\widehat{\psi}_{\epsilon}(\lambda)||_{op}^2||\muhat(\lambda)\pla||_{L^2(\Rn)}^2\\
&=||\psi_{\epsilon}||^2_{L^1(\Hn)}\sum_{\beta}|\muhatH(\alpha,\beta,\lambda)|^2=\sum_{\beta}|\muhatH(\alpha,\beta,\lambda)|^2.
\end{align*}
So, by the dominated convergence theorem,
\begin{multline*}
\lim_{\epsilon\to0}\int_{\Hhat}\mathfrak{K}_{Q-\sigma}(\alpha,\lambda)|\muehat(\zeta)|^2d\zeta\\=\sum_{\alpha\in\N^n}\int_{\R^*}\mathfrak{K}_{Q-\sigma}(\alpha,\lambda)\lim_{\epsilon\to0}\sum_{\beta}|\muehat(\zeta)|^2d\zeta=\int_{\Hhat}\mathfrak{K}_{Q-\sigma}(\alpha,\lambda)|\muhatH(\zeta)|^2d\zeta.
\end{multline*}

Now the energy on the spatial domain is studied.

Once again, if $I_{\sigma}(\mu)=\infty$, Fatou's Lemma gives,
\[\infty\leq\liminf_{\epsilon\to0}I_{\sigma}(\mue),\]
so the equality is trivial.\\

Assume $I_{\sigma}(\mu)<\infty$. By expanding the convolution $\psi_{\epsilon}\ast\mu$, and applying Fubini's theorem,
\[I_{\sigma}(\mue)=\iint\left(\iint ||q^{-1}p||_{\Hn}^{-\sigma}\psi_{\epsilon}(a^{-1}p)\psi_{\epsilon}(b^{-1}q)dpdq\right)d\mu(a)d\mu(b).\]
Using $W=\delta_{1/\epsilon}(a^{-1}p)$ and $Z=\delta_{1/\epsilon}(b^{-1}q)$, the inner integral becomes
\[\iint ||\delta_{\epsilon}(Z)b^{-1}a\delta_{\epsilon}(W)||_{\Hn}^{-\sigma}\psi(Z)\psi(W)dZdW.\]

As $\epsilon\to0$, this integral goes to $||b^{-1}a||^{-\sigma}_{\Hn}$ as long as $a\neq b$. Moreover, it shows that 
\[\iint ||q^{-1}p||_{\Hn}^{-\sigma}\psi_{\epsilon}(a^{-1}p)\psi_{\epsilon}(b^{-1}q)dpdq\lesssim ||b^{-1}a||^{-\sigma}_{\Hn}.\]
By assumption, $||b^{-1}a||^{-\sigma}_{\Hn}$ is integrable, so by the Dominated Convergence Theorem
\begin{multline*}
\lim_{\epsilon\to0}I_{\sigma}(\mue)=\lim_{\epsilon\to0}\iint\left(\iint ||q^{-1}p||_{\Hn}^{-\sigma}\psi_{\epsilon}(a^{-1}p)\psi_{\epsilon}(b^{-1}q)dpdq\right)d\mu(a)d\mu(b)\\=\iint\left(\lim_{\epsilon\to0}\iint ||q^{-1}p||_{\Hn}^{-\sigma}\psi_{\epsilon}(a^{-1}p)\psi_{\epsilon}(b^{-1}q)dpdq\right)d\mu(a)d\mu(b)\\=\iint ||a^{-1}b||_{\Hn}^{-\sigma}d\mu(a)d\mu(b)=I_{\sigma}(\mu).
\end{multline*}
This completes the proof.
\end{proof}

 %%%%%%%%%%%%%%%%%%%%%%%%%%%%%%%%%%%%%%%%%%%%%%%
%%%%%%%%%%%%%%%%%%%%%%%%%%%%%%%%%%%%%%%%%%%%%%%

\section{Acknowledgements}
The author would like to thank Professor Jeremy Tyson for his advising and guidance during this project. The author would also like to thank Professor Marius Junge for many helpful discussions and insights related to this project.

\bibliographystyle{plain}

\bibliography{Biblio2}

\end{document}